\documentclass[11pt,a4paper]{amsart}
\usepackage{fullpage}              
              \usepackage{graphicx,verbatim,url,paralist}
\usepackage{amssymb}
\usepackage{epstopdf,xcolor,hyperref,graphicx,overpic}
\usepackage[normalem]{ulem}

\theoremstyle{plain}
\newtheorem{thm}{Theorem}[section]
\newtheorem*{thm*}{Theorem}
\newtheorem{thmi}{Theorem} 
\newtheorem{prop}[thm]{Proposition}
\newtheorem*{prop*}{Proposition}
\newtheorem{lemma}[thm]{Lemma}
\newtheorem*{lemma*}{Lemma}
\newtheorem*{claim*}{Claim}
\newtheorem{corollary}[thm]{Corollary}

\theoremstyle{definition}
\newtheorem{rmk}[thm]{Remark}

\newcommand{\R}{\mathbb{R}}

\newcommand{\tA}{\tilde{A}}
\newcommand{\tC}{\tilde{C}}
\newcommand{\tG}{\tilde{G}}
\newcommand{\App}{\mathcal{A}}
\newcommand{\Aff}{\mathbb{A}}
\newcommand{\br}{\mathbf{r}}

\newcommand{\CAT}{\operatorname{CAT}}

\newcommand{\Fix}{\operatorname{Fix}}

\newcommand{\op}{\mathrm{op}}

\title[Fixed points for group actions on $2$-dimensional affine buildings]{Fixed points for group actions on \\$2$-dimensional affine buildings}
\author{Jeroen Schillewaert, Koen Struyve and Anne Thomas}
\address{Jeroen Schillewaert, Department of Mathematics, University of Auckland, 38 Princes Street, 1010 Auckland, New Zealand 
\newline Koen Struyve, Burgemeester Couckestraat 4, 8720 Dentergem, Belgium
\newline Anne Thomas, School of Mathematics \& Statistics, Carslaw Building F07,  University of Sydney NSW 2006, Australia}
\email{j.schillewaert@auckland.ac.nz,  kstruy@gmail.com, anne.thomas@sydney.edu.au}
\thanks{This research of the first author is supported by the New Zealand Marsden fund through grant UOA-2122 and that of the third author by ARC grant DP180102437.}

\begin{document}
\parindent 0cm
\parskip 2mm

\begin{abstract}
We prove a local-to-global result for fixed points of finitely generated groups acting on 2-dimensional affine buildings of types $\tA_2$ and $\tC_2$.  Our proofs combine building-theoretic arguments with standard results for $\CAT(0)$ spaces.
\end{abstract}

\maketitle

\section{Introduction}

The study of local-to-global results for fixed points of groups acting on affine buildings originated with Serre, who proved such a result for simplicial trees~\cite[Corollary 3 of Section 6.5]{Serre} and introduced property (FA). Serre's result was extended by Morgan and Shalen to $\mathbb{R}$-trees~\cite[Proposition II.2.15]{MorganShalen}.  We prove a similar result for certain 2-dimensional affine buildings, possibly nondiscrete.   
We note that the results of Serre and Morgan--Shalen immediately imply the following theorem for buildings of type $\tilde{A}_1 \times \tilde{A}_1$, since such buildings are products of trees. 

\begin{thmi}\label{thm:main}  Let $G$ be a finitely generated group of automorphisms of a 2-dimensional affine building $X$ of type $\tA_2$ or $\tC_2$ (possibly nondiscrete). If every element of $G$ fixes a point of $X$, then $G$ fixes a point of $X$.
\end{thmi}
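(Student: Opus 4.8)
The plan is to reduce the problem to the one-dimensional results of Serre~\cite{Serre} and Morgan--Shalen~\cite{MorganShalen} by passing to the boundary at infinity. Since $X$ is a complete $\CAT(0)$ space, it suffices, by the Bruhat--Tits fixed point theorem, to produce a bounded $G$-orbit, so suppose for contradiction that no $G$-orbit is bounded. A naive attempt---intersecting the fixed-point sets $\Fix(g_i)$ of a finite generating set and invoking a Helly property---fails in dimension two, since the relevant Helly number exceeds $2$; this is precisely what forces us to exploit the geometry of the building rather than pairwise intersections.

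Instead, I would argue as follows. Because every element of $G$ is elliptic, $G$ contains no hyperbolic isometry, and an elliptic isometry fixing $p$ fixes the entire ray $[p,\xi)$ whenever it fixes a boundary point $\xi\in\partial_\infty X$; hence $G$ preserves every Busemann function attached to a fixed boundary point. Using finite generation together with the absence of hyperbolic elements, standard $\CAT(0)$ arguments show that the failure of bounded orbits forces $G$ to fix a point $\xi\in\partial_\infty X$. As $G$ acts by automorphisms, and passing to a finite-index type-preserving subgroup if necessary, we may take $\xi$ to lie in a $G$-stabilised simplex $\sigma$ of the spherical building $\partial_\infty X$, which is of type $A_2$ (respectively $C_2$).

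The heart of the matter is then a reduction in rank. A $G$-stabilised simplex $\sigma$ at infinity determines a parallel class of sectors in $X$, and the combinatorics of type $\tilde{A}_2$ and $\tilde{C}_2$ allow us to build a $G$-equivariant retraction of $X$ onto a one-dimensional object transverse to this class: a simplicial tree (in the discrete case) or an $\R$-tree (in general) when $\sigma$ is a panel, and a Euclidean line when $\sigma$ is a chamber. Crucially, an element that is elliptic on $X$ remains elliptic on this transverse tree. Applying Serre's theorem~\cite{Serre} in the discrete case, or Morgan--Shalen~\cite{MorganShalen} in the nondiscrete case, $G$ fixes a point of the tree (or of the line). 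Pulling this fixed point back confines $G$ to a $G$-invariant convex subset whose transverse geometry has strictly smaller complexity; a short induction on the rank then yields a bounded $G$-orbit, the desired contradiction.

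The main obstacle is this rank-reduction step: producing, from a $G$-fixed simplex at infinity, an equivariant tree quotient on which ellipticity is preserved and on which a fixed point genuinely simplifies the action on $X$. The delicate input is that for $A_2$ and $C_2$ the wall-combinatorics and the residues transverse to $\xi$ assemble into a tree compatible with the $\CAT(0)$ retraction, whereas the root-system geometry of $\tilde{G}_2$ obstructs such a compatible transverse tree---which is exactly why we expect the method to fail there.
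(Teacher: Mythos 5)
There is a genuine gap, and it sits exactly where the paper has to work hardest. Your argument hinges on the claim that ``finite generation together with the absence of hyperbolic elements'' and ``standard $\CAT(0)$ arguments'' force a finitely generated group with unbounded orbits to fix a point of $\partial_\infty X$. No such standard result exists, and this claim is essentially equivalent to the theorem you are trying to prove: it is the contrapositive of the paper's core technical result (Proposition~\ref{prop:hyp}), that two proper finitely generated subgroups with nonempty \emph{disjoint} fixed point sets force a hyperbolic element. The natural tool, Caprace--Lytchak (Lemma~\ref{CL-argument}), applies only to a filtering family of \emph{nonempty} closed convex sets; the obvious candidates (fixed sets of finite subsets of $G$, or sublevel sets of the displacement function) are precisely the sets whose nonemptiness is the content of the theorem, so the argument is circular. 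Concretely, your method has nothing to say in the following situation, which is the generic hard case: $A=\Fix(G_0)$ and $B=\Fix(G_1)$ are nonempty, disjoint, and realise their distance at points $a_0,a_1$ (so no common boundary point is supplied by Lemma~\ref{CL-argument}). The paper disposes of this case only by the type-specific incidence-geometry ``local lemmas'' (Lemmas~\ref{lem:local-A1A1C2} and~\ref{lem:local-A2}) and the explicit construction of an orbit on which a Busemann function is unbounded (Lemmas~\ref{lem:sequence}--\ref{unbounded}); nothing in your proposal replaces this.

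Two further points. First, your rank-reduction step is essentially sound and is in fact close to machinery the paper does use: once a panel $p_\infty$ at infinity is fixed, ellipticity descends to the panel tree (this is Lemma~\ref{fix-on-wall-tree}), and tree-theoretic results (Culler--Morgan, Morgan--Shalen) finish; the paper exploits exactly this to prove Corollary~\ref{distance}. But this step only becomes available \emph{after} a boundary fixed point is secured, which your Step A does not deliver. Second, your diagnosis of why $\tilde{G}_2$ should fail is not right: panel trees and the associated transverse $\R$-tree structure exist for affine buildings of \emph{all} types (Tits), including $\tilde{G}_2$, so your argument is actually type-agnostic --- a warning sign, since the paper's restriction to $\tilde{A}_2$ and $\tilde{C}_2$ comes from the failure of the local lemma for the $G_2$ hexagon (Remark~\ref{rem:G2}), an ingredient your proof never invokes. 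A minor additional issue: you assume $X$ is metrically complete so that Bruhat--Tits applies, but affine buildings need not be complete; the paper needs the ultrapower reduction (Lemma~\ref{complete}) for this.
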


\begin{rmk}
If a $2$-dimensional affine building $X$ is the Bruhat--Tits building for a reductive group over a field with valuation, or $X$ is discrete, then $X$ has type $\tA_2$, $\tC_2$ or $\tG_2$.  We give a detailed explanation of why our techniques do not extend to type $\tG_2$ in Remarks~\ref{rem:G2local} and~\ref{rem:G2main}.
\end{rmk}

By applying Theorem~\ref{thm:main} to all finitely generated subgroups and then invoking a result of Caprace and Lytchak \cite[Theorem 1.1]{CL10}, we obtain the following corollary for non-finitely generated groups.  

\begin{corollary}\label{compactification}
Suppose a group $G$ acts on a complete 2-dimensional affine building $X$ of type $\tA_2$ or $\tC_2$ such that every element of $G$ fixes a point of~$X$. Then $G$ fixes a point in the bordification $\overline{X}=X\cup \partial X$ of $X$.
\end{corollary}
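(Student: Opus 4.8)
The plan is to deduce this directly from Theorem~\ref{thm:main} by the standard reduction to finitely generated subgroups, fed into the Caprace--Lytchak fixed-point theorem for $\CAT(0)$ spaces of finite telescopic dimension \cite[Theorem 1.1]{CL10}. The two things I would need to supply are that $X$ lies within the scope of that theorem, and that the hypothesis of \cite[Theorem 1.1]{CL10} (a fixed point, hence a bounded orbit, for every finitely generated subgroup) is furnished by Theorem~\ref{thm:main}.

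First I would record that $X$ satisfies the hypotheses of \cite[Theorem 1.1]{CL10}. An affine building of type $\tilde{A}_2$ or $\tilde{C}_2$ is a $2$-dimensional $\CAT(0)$ space, it is complete by assumption, and since its geometric dimension equals $2$ its telescopic dimension is finite (bounded above by $2$). Thus $X$ is a complete $\CAT(0)$ space of finite telescopic dimension on which $G$ acts by isometries. Next I would verify the subgroup hypothesis. Let $H \leq G$ be finitely generated. Every element of $H$ is an element of $G$ and so, by assumption, fixes a point of $X$; hence $H$ is a finitely generated group of automorphisms of $X$ each of whose elements fixes a point, which is exactly the hypothesis of Theorem~\ref{thm:main}. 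Therefore $H$ fixes a point of $X$, and in particular every $H$-orbit is bounded.

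With both ingredients in hand, \cite[Theorem 1.1]{CL10} applies and produces a point of $\overline{X} = X \cup \partial X$ fixed by $G$, as required. I expect the only genuine point to check is the verification that $X$ falls within the scope of \cite[Theorem 1.1]{CL10}, and specifically that its telescopic dimension is finite; once this is granted the statement is a purely formal consequence of Theorem~\ref{thm:main}, since the passage from ``fixed point for each finitely generated subgroup'' to ``fixed point in the bordification'' is precisely the directed-system argument already encapsulated in the Caprace--Lytchak result. Note also that completeness is used only here, which is why it does not appear in the hypotheses of Theorem~\ref{thm:main}.
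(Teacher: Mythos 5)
Your proposal is correct and is essentially the paper's own argument: the paper justifies Corollary~\ref{compactification} precisely by considering finitely generated subgroups, each of which fixes a point by Theorem~\ref{thm:main}, and feeding the resulting filtering family of nonempty closed convex fixed-point sets into \cite[Theorem 1.1]{CL10}, whose canonical-center-at-infinity conclusion yields a $G$-fixed point in $\overline{X}=X\cup\partial X$. Your supplementary checks (telescopic dimension at most $2$, bounded orbits for finitely generated subgroups, and the observation that completeness is needed only at this step) are exactly the routine verifications the paper leaves implicit.
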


When $X$ is discrete, Corollary \ref{compactification} confirms Conjecture 1.2 of Marquis~\cite{Marquis} for this class of buildings. In this paper Marquis introduces Property (FB): every measurable action of a group~$G$ by type-preserving simplicial isometries on a finite rank discrete building stabilises a spherical residue. Since continuous actions are measurable, we see that for discrete groups this is a higher-rank analogue of Serre's property (FA).  As explained in~\cite[Remark 3.4]{Marquis}, Corollary~\ref{compactification} combined with the Morgan--Shalen result for trees implies a special case of Conjecture 1.1 of~\cite{Marquis}.
\begin{corollary}
Let $G$ be an almost connected locally compact group which acts measurably by type-preserving simplicial isometries on a discrete 2-dimensional affine building of type $\tA_2$ or~$\tC_2$.  Then $G$ fixes a point of $X$.
\end{corollary}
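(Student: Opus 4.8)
The plan is to prove the corollary by showing that the action factors through the compact quotient $G/G^0$, at which point the hypotheses of Corollary~\ref{compactification} are met and a genuine fixed point can be extracted. Write $\rho\colon G\to\operatorname{Aut}(X)$ for the homomorphism recording the action, and equip $\operatorname{Aut}(X)$ with the permutation topology, for which a base of identity neighbourhoods is given by the pointwise stabilisers of finite sets of simplices. Since $G$ acts by type-preserving simplicial isometries of the \emph{discrete} building $X$, these stabilisers are open, so $\operatorname{Aut}(X)$ is a totally disconnected topological group.

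First I would upgrade the measurability hypothesis to continuity. After replacing $X$ by a $G$-invariant countable subcomplex and passing to the effective quotient, the target $\operatorname{Aut}(X)$ becomes second countable, and standard automatic-continuity results (a Haar-measurable homomorphism from a locally compact group into a second countable topological group is continuous) show that $\rho$ is continuous. Because $G$ is almost connected, $G^0$ is connected, so its image $\rho(G^0)$ is a connected subgroup of the totally disconnected group $\operatorname{Aut}(X)$ and is therefore trivial. Thus $\rho$ factors through $\overline{G}:=G/G^0$, which is compact.

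It remains to produce the fixed point. For every $g\in G$ the closure of $\langle\rho(g)\rangle$ is contained in the compact group $\rho(\overline{G})$, hence is compact and fixes a point of the complete $\CAT(0)$ space $X$ by the Bruhat--Tits fixed point theorem; so every element of $G$ fixes a point of $X$, and Corollary~\ref{compactification} yields a point of $\overline{X}=X\cup\partial X$ fixed by all of $G$. To land inside $X$ one uses compactness directly: any orbit $G\cdot x=\rho(\overline{G})\cdot x$ is compact, hence bounded, and its circumcentre is a point of $X$ fixed by $G$. This is the incarnation, for the irreducible types $\tilde{A}_2$ and $\tilde{C}_2$, of the reduction in \cite[Remark~3.4]{Marquis}, where the tree factors handled via \cite{MorganShalen} simply do not occur. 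The main obstacle is the automatic-continuity step: a finite-rank discrete building need not be locally finite, so the reduction to a second countable target---and with it the verification that $\rho(G^0)$ is trivial---is where the genuine work lies, everything afterwards being a formal consequence of the compactness of $G/G^0$.
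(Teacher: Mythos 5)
There is a genuine gap, and it is fatal rather than technical: the claim your whole proof funnels through --- that measurability can be upgraded to continuity, so that $\rho(G^0)$ is trivial and the action factors through the compact group $G/G^0$ --- is false. Take $X$ to be the Bruhat--Tits building of $\mathrm{SL}_3(\mathbb{C}((t)))$, a discrete, thick, non-locally-finite building of type $\tilde{A}_2$, and let $G = \mathrm{SL}_3(\mathbb{C})$, a connected Lie group, act via the inclusion of constant matrices. This action is type-preserving and simplicial, and it is measurable in the relevant sense: for any two vertices (homothety classes of $\mathbb{C}[[t]]$-lattices) the set $\{g \in \mathrm{SL}_3(\mathbb{C}) : g[L] = [L']\}$ is cut out by countably many linear equations in the matrix entries, hence is Zariski-closed and in particular Borel, and likewise for chambers; examples of exactly this kind (Lie groups of constants inside algebraic or Kac--Moody groups over $\mathbb{C}((t))$) are the motivating examples for Marquis's property (FB), so any notion of measurable action for which the corollary is of interest must include them. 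Yet $\rho(G^0) = \rho(G)$ is nontrivial (faithful modulo the finite centre), the action is not continuous for the permutation topology --- the stabiliser in $\mathrm{SL}_3(\mathbb{C})$ of the vertex $[\mathbb{C}[[t]]e_1 + \mathbb{C}[[t]]e_2 + \mathbb{C}[[t]]t^{-1}e_3]$ is the closed, non-open parabolic subgroup $\{g : ge_3 \in \mathbb{C}e_3\}$ --- and the orbit of that vertex is in bijection with $\mathbb{P}^2(\mathbb{C})$, so it is uncountable and no $G$-invariant countable subcomplex exists. Thus the step you yourself flag in your closing sentence as ``where the genuine work lies'' cannot be completed by any amount of work, because the statement it is meant to establish is false. (The corollary's conclusion does hold in this example --- $\mathrm{SL}_3(\mathbb{C})$ fixes the standard vertex --- just not for your reasons.)

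This also explains why the paper's route has the shape it does. The paper never attempts to kill the action of $G^0$; its proof is to cite \cite[Remark 3.4]{Marquis}, in which Marquis's own results supply the hard measure-theoretic input needed to invoke Corollary~\ref{compactification}, namely that every element of an almost connected locally compact group acting measurably on such a building fixes a point --- a statement that must accommodate, and does accommodate, the example above. Corollary~\ref{compactification} then yields a point of $X \cup \partial X$ fixed by $G$, and the Morgan--Shalen theorem for trees \cite{MorganShalen} is needed in Marquis's reduction to handle the possibility that this fixed point lies in $\partial X$ rather than in $X$. Your parenthetical claim that Morgan--Shalen is dispensable because ``tree factors do not occur'' contradicts the paper, which states explicitly that Corollary~\ref{compactification} must be \emph{combined} with Morgan--Shalen for these irreducible types. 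Finally, two internal warning signs in your write-up: your appeal to Corollary~\ref{compactification} is redundant within your own architecture (once orbits are bounded, the Bruhat--Tits fixed point theorem \cite{BT} finishes immediately), and nothing in your argument uses affineness or the types $\tilde{A}_2$, $\tilde{C}_2$, so if it were correct it would prove Marquis's Conjecture 1.1 for every finite rank building by soft arguments --- far too strong for a statement that remains a conjecture.
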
 

Many other local-to-global results similar to Theorem~\ref{thm:main} are known. Parreau in \cite[Corollaire~3]{Parreau} proved a similar result for subgroups $\Gamma$ of connected reductive groups $\mathcal{G}$ over certain fields $F$, where $\Gamma$ is generated by a bounded subset of $\mathcal{G}(F)$ and the action is on the completion of the associated Bruhat--Tits building.  Breuillard and Fujiwara established a quantitative version of Parreau's result for discrete Bruhat--Tits buildings \cite[Theorem 7.16]{BreuillardFujiwara} and asked whether their result holds for the isometry group of an arbitrary affine building.  Leder and Varghese in \cite{LederVarghese}, using work of Sageev~\cite{Sageev}, obtained a similar result for groups acting on finite-dimensional $\CAT(0)$ cube complexes. However, a statement similar to Theorem \ref{thm:main} is false for infinite-dimensional $\CAT(0)$ cubical complexes, as shown by Osajda using actions of infinite free Burnside groups~\cite{Osajda}. 

While we were preparing this paper, Norin, Osajda and Przytycki~\cite{NOP} proved a result closely related to Theorem~\ref{thm:main}.

\begin{thm}[Theorem 1.1 of~\cite{NOP}] \label{NOP}
Let $X$ be a $\CAT(0)$ triangle complex and let $G$ be a finitely generated group acting on $X$ with no global fixed point. Assume that either
each element of~$G$ fixing a point of $X$ has finite order, or $X$ is locally finite, or $X$ has rational angles. Then $G$ has an element with no fixed point in $X$.
\end{thm}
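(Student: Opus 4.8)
The plan is to prove the contrapositive: assuming that every element of $G$ fixes a point of $X$ (is elliptic) and that one of the three listed conditions holds, I will produce a global fixed point, so that whenever $G$ has no global fixed point some element must act without a fixed point. Suppose then, for contradiction, that every element of $G$ is elliptic but $G$ has no global fixed point in $X$. Since $X$ is a $\CAT(0)$ triangle complex it is a finite-dimensional $\CAT(0)$ space (which I may take to be complete, automatic for instance under local finiteness), and since $G$ is finitely generated I would invoke the theorem of Caprace--Lytchak \cite[Theorem 1.1]{CL10}: a group all of whose elements are elliptic, acting without a global fixed point, fixes a point $\xi$ in the visual boundary $\partial X$.

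The second step is to descend in dimension at $\xi$. Fixing $\xi$ produces a Busemann character $\beta_\xi\colon G\to\R$ together with an isometric action of $\ker\beta_\xi$ on the transverse space $X_\xi$ attached to the horospheres centred at $\xi$ (the union of geodesic rays asymptotic to $\xi$, modulo the asymptotic relation). The essential use of the triangle-complex hypothesis is dimensional: $X$ is $2$-dimensional, so $X_\xi$ is a $1$-dimensional complete $\CAT(0)$ space, i.e.\ an $\R$-tree. I would then transfer the problem to $X_\xi$ and apply the Morgan--Shalen local-to-global theorem \cite[Proposition II.2.15]{MorganShalen} for $\R$-trees to the induced action.

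For this transfer to work I must check that $\beta_\xi$ is trivial and that every element acts elliptically on $X_\xi$, and this is exactly where the three alternative hypotheses enter, each ruling out a translation toward $\xi$. A torsion element cannot have nonzero Busemann character, so the \emph{finite order} case forces $\beta_\xi\equiv 0$ at once; \emph{local finiteness} makes the relevant displacement values discrete, again precluding nonzero translation length toward $\xi$ for an elliptic element; and \emph{rational angles} guarantee that the rotation subgroups arising in the link of $\xi$ are discrete, so that an isometry elliptic on $X$ cannot induce a hyperbolic isometry of the $\R$-tree $X_\xi$. In each case the induced action on $X_\xi$ has all elements elliptic, Morgan--Shalen produces a fixed point $\eta\in X_\xi$, and pulling $\eta$ back together with the vanishing of $\beta_\xi$ yields a fixed point of $G$ in $X$, contradicting our assumption.

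The main obstacle I expect is precisely the passage between elliptic behaviour on $X$ and on the transverse space: an element may fix points of $X$ while drifting along the horospheres centred at $\xi$, and controlling this drift is what the three technical conditions are designed to prevent. Making the transverse $\R$-tree precise (verifying that $X_\xi$ is genuinely $1$-dimensional and $\CAT(0)$, with a well-defined isometric $\ker\beta_\xi$-action) and confirming that ``all elements elliptic'' is inherited by the induced action under each hypothesis separately is where the real work lies. The rational-angles case in particular demands a careful analysis of the combinatorial geometry of links, since that is the one place where the specifically $2$-dimensional polyhedral structure of a triangle complex, rather than abstract $\CAT(0)$ geometry, must be exploited.
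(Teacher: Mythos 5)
First, note that this statement is not proved in the paper at all: it is Theorem~1.1 of~\cite{NOP}, quoted for context, and the surrounding remark records that the actual proof in~\cite{NOP} runs through Helly's theorem, Masur's theorem on periodic trajectories in rational billiards~\cite{Masur}, and Ballmann--Brin's closed-geodesic techniques~\cite{BB95} --- nothing resembling your proposed descent to a transverse $\R$-tree. Your route is therefore genuinely different, but it contains two fatal gaps.

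The first gap is the opening appeal to Caprace--Lytchak. Theorem~1.1 of \cite{CL10} concerns \emph{filtering} families of closed convex subsets (any two members of the family contain a third member). The family $\{\Fix(g) : g \in G\}$ is not filtering: for two elliptic elements $g,h$ the intersection $\Fix(g)\cap\Fix(h)$ may be empty, and whether $\langle g,h\rangle$ fixes a point is exactly the question under discussion. The statement you attribute to \cite{CL10} --- that a finitely generated group with all elements elliptic and no global fixed point must fix a point of $\partial X$ --- is not their theorem and is not known in this generality; it is essentially the hard content of the result you are trying to prove. (In the present paper, \cite{CL10} is used only to pass from finitely generated to arbitrary groups in Corollary~1.2, where the relevant family --- fixed sets of finitely generated subgroups --- \emph{is} filtering precisely because Theorem~A has already been established for such subgroups.)

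The second gap is that the descent mechanism is broken at both ends. The verifications to which you devote the three hypotheses are in fact automatic: if $g$ fixes $x\in X$ and fixes $\xi$, then $g$ preserves the ray $[x,\xi)$ and fixes its basepoint, hence fixes that ray pointwise; consequently $\beta_\xi(g)=0$ and $g$ fixes the corresponding point of $X_\xi$, with no use whatsoever of torsion, local finiteness, or rational angles. Your argument would thus prove the theorem with all three hypotheses deleted --- a warning sign, since those hypotheses are exactly what \cite{NOP} need. The step that actually fails is the last one: a global fixed point $\eta\in X_\xi$ together with $\beta_\xi|_G\equiv 0$ does \emph{not} pull back to a fixed point in $X$. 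Fixing a point of the transverse space means only that $G$ preserves a strong-asymptote class of rays to $\xi$; a group can do this, preserve every horoball centred at $\xi$, and still fix no point of $X$. This is precisely the parabolic phenomenon: a parabolic isometry of a complete $\CAT(0)$ space fixes its point at infinity, has vanishing Busemann character, fixes the associated point of the transverse space, and fixes nothing in $X$. Excluding such behaviour for the group action on a triangle complex is where all the difficulty lies --- it is what the billiards and closed-geodesic machinery of \cite{NOP} is for --- so as written your proposal assumes the hard part at the beginning and asserts a false implication at the end.
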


\begin{rmk}
One of the most important classes of $\CAT(0)$ triangle complexes is that of $2$-dimensional discrete affine buildings, and since such buildings have rational angles, Theorem~\ref{NOP} applies to these.  In particular, it applies to discrete buildings of type $\tG_2$.  On the other hand,  Theorem~\ref{thm:main} is valid for both discrete and nondiscrete buildings (of types $\tA_2$ and $\tC_2$), and as explained below, some parts of our argument are straightforward in the discrete case. Our proof combines general $\CAT(0)$-space techniques with arguments specific to spherical and affine buildings, while \cite{NOP} uses Helly's theorem from~\cite{Ivanov} together with sophisticated results including Masur's theorem on periodic trajectories in rational billiards~\cite{Masur}, and Ballmann and Brin's methods for finding closed geodesics in $2$-dimensional locally $\CAT(0)$ complexes~\cite{BB95}. 
\end{rmk}

The next result follows from Theorem~\ref{thm:main} together with the fact that every isometry of a complete affine building is semisimple, that is, either it fixes a point or it is hyperbolic~\cite[Corollaire~4.2]{Parreau_immeubles}.  (The corresponding consequence of  Theorem~\ref{NOP} is~\cite[Corollary~1.3]{NOP}.)

\begin{corollary}\label{hyperbolic}
If a finitely generated group $G$ acts without a fixed point on a complete 2-dimensional (possibly nondiscrete) affine building $X$ of type $\tA_2$ or $\tC_2$, then $G$ contains a hyperbolic isometry, in particular $\mathbb{Z} \leq G$.
\end{corollary}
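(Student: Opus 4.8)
The plan is to argue by contraposition using Theorem~\ref{thm:main} and then to invoke semisimplicity to upgrade a fixed-point-free element to a hyperbolic one. First I would read off the contrapositive of Theorem~\ref{thm:main}: if a finitely generated group $G$ of automorphisms of $X$ has no global fixed point, then some element $g \in G$ fixes no point of $X$. Since by hypothesis $G$ acts without a global fixed point, this immediately produces such a $g$.

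Next I would appeal to the semisimplicity of isometries of complete affine buildings, namely the fact recorded just before the statement from~\cite[Corollaire~4.2]{Parreau_immeubles}: every isometry of $X$ either fixes a point (is elliptic) or is hyperbolic. Because $g$ fixes no point of $X$, it cannot be elliptic, so it must be hyperbolic. This already establishes that $G$ contains a hyperbolic isometry, which is the first assertion.

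Finally, to obtain $\mathbb{Z} \leq G$, I would check that $g$ has infinite order. A hyperbolic isometry has a positive translation length $\ell(g) = \inf_{x \in X} d(x, gx) > 0$, realised along an axis on which $g$ acts as a translation. For every nonzero integer $n$, the power $g^n$ translates along the same axis with translation length $|n|\,\ell(g) > 0$, so $g^n$ is itself hyperbolic and in particular fixes no point; hence $g^n \neq \mathrm{id}$. Therefore $\langle g \rangle \cong \mathbb{Z}$ is an infinite cyclic subgroup of $G$.

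I do not anticipate a serious obstacle, as the statement is a formal consequence of Theorem~\ref{thm:main} together with a standard structural fact. The only points requiring a little care are verifying that completeness of $X$ is precisely the hypothesis needed to apply the semisimplicity result of Parreau, and confirming that the positivity of the translation length propagates to all nonzero powers, which holds because translation lengths add along a common axis.
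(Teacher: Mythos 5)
Your proof is correct and follows exactly the route the paper intends: the contrapositive of Theorem~\ref{thm:main} yields an element with no fixed point, and Parreau's semisimplicity result~\cite[Corollaire~4.2]{Parreau_immeubles} upgrades it to a hyperbolic isometry, whose nonzero powers are nontrivial since translation lengths add along an axis. Your extra care in verifying that $\langle g \rangle \cong \mathbb{Z}$ is a welcome elaboration of the "in particular" that the paper leaves implicit.
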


\noindent We note that Swenson proved in~\cite[Theorem 11]{Swenson99} that if a group $G$ acts properly discontinuously and cocompactly on an (unbounded, proper) $\CAT(0)$ space $X$, then $G$ has an element of infinite order.  So Corollary~\ref{hyperbolic} can be viewed as a strengthening of this result in some special cases.

Corollary \ref{hyperbolic} in particular gives a negative answer to the question of whether finitely generated infinite torsion groups can act on discrete $2$-dimensional affine buildings (of types $\tA_2$ or $\tC_2$) without fixing a point. This question formed the initial motivation for our work, and was generously suggested to the first author by Pierre-Emmanuel Caprace as a test case to complete a first (small) step towards a Tits Alternative for groups acting on $\CAT(0)$ spaces. 

We prove Theorem~\ref{thm:main} in Section~\ref{sec:proof} by first establishing a result (Corollary~\ref{distance}) concerning the the realisation of distances between fixed point sets of finitely generated subgroups of $G$; this result is immediate for~$X$ discrete.  The proof of Corollary~\ref{distance} given here uses a theorem of Caprace and Lytchak~\cite{CL10} together with properties of panel-trees.  (Recently, Leung, Schillewaert and Thomas~\cite{LeungST} gave an alternative proof of Corollary~\ref{distance}.)  We then reduce to the case that $X$ is a metrically complete $\R$-building and the action of $G$ is type-preserving. We remark that any discrete building is metrically complete, but a nondiscrete building is not necessarily metrically complete, even when it is a Bruhat--Tits building~\cite{MSSS}.  Moreover, the Cauchy completion of a nondiscrete building may not be a building at all~\cite{Kramer}.  In order to apply results for \emph{complete} $\CAT(0)$ spaces as well as for buildings, we use properties of ultrapowers of affine buildings due to Kleiner and Leeb~\cite{KL} and Struyve~\cite{Struyve}.

We next show that if $G$ has two finitely generated proper subgroups whose fixed point sets are nonempty and disjoint, then $G$ contains a hyperbolic element (see Proposition~\ref{prop:hyp}).  Theorem~\ref{thm:main} is obtained by combining this result with an easy induction on the number of generators of~$G$.  To prove Proposition~\ref{prop:hyp}, we construct an orbit of an element $g \in G$ together with a point $\xi \in \partial X$ such that the Busemann function with respect to $\xi$ is unbounded on this orbit, hence~$g$ is hyperbolic.  A key role in this construction is played by the ``local lemmas'' which we establish in Section~\ref{sec:local}.  These  guarantee that for~$\Delta$ a spherical building which occurs as the link of a vertex in~$X$, for any point in $\Delta$ there exists an element $g_\Delta$ of $G$ which acts on $\Delta$ and maps this point ``far away'' from itself.  The element $g$ is then the product of two $g_\Delta$ which are chosen carefully in relation to $\xi$.  The results of Section~\ref{sec:local} for spherical buildings are proved using the description of $\Delta$ as a point-line geometry (which varies by type).  Our arguments in Section~\ref{sec:proof} are partly type-free.

Throughout the paper, we assume knowledge of discrete buildings on the level of the references Abramenko--Brown~\cite{AB} or Ronan~\cite{Ronan}.   Our main reference for nondiscrete affine buildings is Parreau's work~\cite{Parreau_immeubles} (a translation of which into English is available at~\cite{Parreau_translation}).  We use many results from~\cite{Parreau_immeubles}, and mostly follow its terminology and notation.  We also assume basic knowledge of $\CAT(0)$ spaces, see e.g. Bridson--Haefliger~\cite{BH}.

{\bf Acknowledgements}  We thank Martin Bridson, Pierre-Emmanuel Caprace, Timoth\'ee Marquis, Dave Witte Morris, Damian Osajda and James Parkinson for helpful conversations, and an anonymous referee for careful reading, including realising the necessity of Corollary~\ref{distance} in the nondiscrete case.  We also thank the University of Auckland for supporting travel between the first and third authors by means of an  FRDF grant and a PBRF grant.

\section{Local lemmas}\label{sec:local}

In this section we consider the action of a group $G$ on a spherical building $\Delta$ of type $A_2$ or $C_2$.  In Section~\ref{sec:proof}, $\Delta$ will be the link of a vertex of~$X$, hence we refer to the results below as local lemmas. 

We realise $\Delta$ as a $\CAT(1)$ space, so that the distances referred to in each statement are the distances in this metric on $\Delta$.  In particular, opposite points of $\Delta$ are at distance~$\pi$.  For $\Delta$ of type $C_2$ (respectively, $A_2$) we consider $\Delta$ as a generalised quadrangle (respectively, a projective plane).  For $g,h \in G$ and $p$ a panel (point or line) of $\Delta$, we then write $p^g$ for the panel obtained by acting on $p$ by $g$, and put $p^{gh}:=(p^g)^h$. 

Our local lemma in type $C_2$ is as follows.

\begin{lemma}\label{lem:local-C2}  Let $\Delta$ be a building of type $C_2$ (realised as a $\CAT(1)$-space) and let $G$ be a group of type-preserving automorphisms of $\Delta$. If $x$ is a point of $\Delta$ (not necessarily a panel) and $p$ is a panel of $\Delta$ at minimum distance from $x$, then at least one of the following holds:

\begin{compactenum}
\item
There is an element $g \in G$ mapping $p$ to a panel opposite $p$. 
\item
There is a panel $p'$ of $\Delta$ which is fixed by $G$ such that $d(p',x)<\frac{\pi}{2}$.
\end{compactenum}
Moreover in case (1), $d(p,gx) \geq \frac{7\pi}{8}$.
\end{lemma}

\begin{proof}  By duality, we may assume that the panel $p$ is a line $l$.  As $d(p,x)\leq \frac{\pi}{8}$, in case~(1) we obtain that $d(p,gx)\geq \frac{7\pi}{8}$. Now assume we are not in case~(1).  Then $l^g\cap l \neq \emptyset$ for all $g\in G$. If $l^g=l$ for all $g\in G$ then (2) holds with $p'=l$.  So suppose $l^{g_0}\neq l$ for some $g_0\in G$, and let $q=l \cap l^{g_0}$.  If $q\notin l^h$ for some $h \in G \setminus \{ g_0\}$, then $l^h \cap l^{g_0}=\emptyset$ (otherwise there is a triangle). But then $l^{g_0h^{-1}}\cap l=\emptyset$, a contradiction. So $q=\bigcap_{g\in G} l^g$, and hence $q^g=q$ for all $g\in G$.  
Let $p'=q$, then since $q\in l$ and $p=l$ we get $d(p,p')=\pi/4$.  Since $d(x,p)\leq \pi/8$, we conclude $d(x,p')\leq \frac{3\pi}{8}<\pi/2$.
\end{proof}

A different statement is required in type $A_2$, where opposite panels have distinct types.

\begin{lemma}\label{lem:local-A2}
Let $\Delta$ be a building of type $A_2$ (realised as a CAT(1)-space) and $G$ be a group of type-preserving automorphisms of $\Delta$. If $x$ is a point of $\Delta$ (not necessarily a panel), $c$ is a chamber of $\Delta$ containing $x$ and $p$ is a panel of $c$, then at least one of the following holds:

\begin{compactenum}
\item There exists $g \in G$ mapping $c$ to a chamber which contains a panel opposite $p$.
\item There is a panel $p'$ of $\Delta$ which is fixed by $G$ such that $d(p',x)<\frac{\pi}{2}$.
\end{compactenum}
Moreover in case (1), $d(p,gx)\geq \frac{2\pi}{3}$.
\end{lemma}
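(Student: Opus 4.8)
The plan is to mirror the proof of Lemma~\ref{lem:local-A1A1C2}, replacing the generalised quadrangle incidence by the two projective plane axioms: two distinct points lie on a unique common line, and two distinct lines meet in a unique common point. First I would record the relevant metric facts about the $\CAT(1)$ realisation of $\Delta$: each apartment is a hexagon of circumference $2\pi$, so every chamber (edge) has length $\frac{\pi}{3}$, two incident panels are at distance $\frac{\pi}{3}$, and two opposite panels (necessarily of distinct types) are at distance $\pi$.

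The ``moreover'' clause is then a one-line reverse triangle inequality. In case (1) the chamber $c^g$ has a vertex $q$ opposite $p$, so $d(p,q)=\pi$; since $gx\in c^g$ and $q$ is an endpoint of the edge $c^g$ of length $\frac{\pi}{3}$, we get $d(q,gx)\le \frac{\pi}{3}$, whence $d(p,gx)\ge d(p,q)-d(q,gx)\ge \pi-\frac{\pi}{3}=\frac{2\pi}{3}$.

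For the dichotomy, I would use the point--line duality of the projective plane to assume that $p$ is the point $P$ of the flag $c=(P,L)$, the case where $p$ is a line being symmetric. Since $g$ is type-preserving, $c^g=(P^g,L^g)$, and the only panel of $c^g$ that can be opposite $P$ is the line $L^g$, which is opposite $P$ exactly when $P\notin L^g$. Thus the failure of case (1) says precisely that $P\in L^g$ for every $g\in G$, i.e.\ all lines in the $G$-orbit $\mathcal{L}=\{L^g:g\in G\}$ pass through $P$. If $\mathcal{L}=\{L\}$ then $L$ is $G$-invariant and I take $p'=L$. Otherwise I pick $g_0$ with $L^{g_0}\ne L$; as these are two distinct lines through $P$ they meet only in $P$, so $\bigcap_{g}L^g\subseteq L\cap L^{g_0}=\{P\}$, forcing $\bigcap_{g}L^g=\{P\}$. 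Since $G$ permutes $\mathcal{L}$ it fixes this intersection, so $P$ is $G$-invariant and I take $p'=P$. In either case $p'$ is a vertex of the edge $c$ containing $x$, so $d(x,p')\le \frac{\pi}{3}<\frac{\pi}{2}$, giving case (2).

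The main thing to get right — rather than a genuine obstacle — is the bookkeeping of types: confirming that in type $A_2$ the only panel of $c^g$ which can be opposite $p$ is the one of the opposite type, so that negating case (1) yields a clean incidence condition, and then verifying that the fixed object produced (the common line $L$, or the common point $P$ of the pencil) is preserved by \emph{all} of $G$ as an orbit, not merely fixed by the single element $g_0$. The duality reduction and the two projective axioms then do the rest.
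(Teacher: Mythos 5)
Your proof is correct and takes essentially the same approach as the paper's: the duality reduction to the flag $(P,L)$, the observation that type-preservation makes failure of (1) equivalent to $P\in L^g$ for all $g\in G$, the dichotomy between a $G$-invariant line and the $G$-fixed common point of the orbit of lines, and the distance bounds from the triangle inequality in the $\CAT(1)$ metric. The only cosmetic difference is that you fix $P$ by noting $G$ permutes the orbit $\{L^g\}$ and hence fixes its intersection, whereas the paper computes $p^h = l^{gh}\cap l^h = p$ directly; these are the same argument.
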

\begin{proof}
By duality and abuse of notation we may let $c$ be the incident point-line pair $(p,l)$.  Assume first that there is a $g \in G$ such that $p \notin l^g$.  Then $l^g$ is opposite $p$, and (1) holds.  As $G$ is type-preserving and $l^g$ is opposite $p$, $d(p,gx)\geq \frac{2\pi}{3}$, as required. 
Suppose now that $p\in \bigcap_{g\in G} l^g$. If $l^g=l$ for all $g \in G$ then (2) holds, so assume there is a $g\in G$ such that $p = l^g \cap l$. Then $p^h = l^{gh} \cap l^h = p$ for all $h\in G$, and (2) holds.
\end{proof}

\begin{rmk}\label{rem:G2local}
In type $G_2$, opposite panels again have the same type.  However, we cannot expect a local lemma of the same form as Lemma~\ref{lem:local-C2}.  For example, let $\Delta$ be a thin building of type $G_2$.  This has $\CAT(1)$ realisation the subdivision of the unit circle into $12$ arcs, each of length $\frac{\pi}{6}$.  Let $G$ be the group of type-preserving automorphisms of $\Delta$ generated by a rotation~$g$  through angle~$\frac{2\pi}{3}$.  Then for every panel $p$ of $\Delta$, we have $d(p,gp) = d(p,g^2 p) = \frac{2\pi}{3}$.  Since no panel is mapped to an opposite by any element of $G$, and $G$ does not fix any panels, neither (1) nor (2) of Lemma~\ref{lem:local-C2} holds in type~$G_2$.

The best ``local lemma" that we have been able to establish in type $G_2$ is the following trichotomy.  We omit its proof, which is considerably more involved than the proofs of Lemmas~\ref{lem:local-C2} and~\ref{lem:local-A2} above, since we will not actually use this result.  We continue our explanation of why our techniques do not extend to affine buildings of type $\tilde{G}_2$ in Remark~\ref{rem:G2main}.

\begin{lemma}\label{lem:local-G2}
Let $\Delta$ be a building of type $G_2$ (realised as a $\CAT(1)$-space) and let $G$ be a group of type-preserving automorphisms of $\Delta$. If $x$ is a point of $\Delta$ (not necessarily a panel) and $p$ is a panel of $\Delta$ at minimum distance from $x$, then at least one of the following three possibilities must hold:
\begin{compactenum}
\item There is an element $g \in G$ mapping $p$ to a panel opposite $p$. 
\item There are elements $g, h \in G$ such that $p$, $gp$ and $hp$ lie in a common apartment of $\Delta$, and $d(p,gp) = d(gp,hp) = d(hp,p) = \frac{2\pi}{3}$.
\item There is a panel $p'$ of $\Delta$ which is fixed by $G$ such that $d(p',x)<\frac{\pi}{2}$.
\end{compactenum}
Moreover in cases (1) and (2), $d(p,gx) \geq \frac{7\pi}{12}$.
\end{lemma}
\end{rmk}

\section{Proof of the main theorem}\label{sec:proof}

Let $X$ be an affine building, defined as in~\cite[Section 1.2]{Parreau_immeubles}, of type $\tA_2$ or $\tC_2$.  We equip~$X$ with the maximal system of apartments.  Each apartment of $X$ is modelled on the pair $(\Aff,W)$, where~$\Aff$ is a $2$-dimensional real vector space and $W$ is a subgroup of the affine isometry group of $\Aff$ such that the linear part of $W$ is a finite reflection group $\overline{W}$ of type~$A_2$ or $C_2$, respectively.  

Now as in~\cite[Section 1.3.2]{Parreau_immeubles}, each facet of $X$ has a \emph{type} given by the type of the corresponding facet of the fundamental Weyl chamber.  In particular, the codimension one facets of $X$ have just $2$ possible types, with one facet of each of these types bounding each sector (Weyl chamber) in $X$, and the types of facets in any given apartment are invariant under translations of this apartment.  In type  $\tC_2$, each wall of $X$ thus also has a well-defined \emph{type}, induced by the type of the (opposite) facets it contains, and parallel walls in the same apartment have the same type.  We remark that this concept of type is different to the usual definition of type for discrete buildings, where a discrete building of type a rank $n$ Coxeter system has $n$ distinct types of panels.

As in~\cite[Section 6.8]{Rousseau}, we define an \emph{automorphism} of $X$ to be an isometry of $X$ which maps facets to facets and apartments to apartments.  We use this definition rather than the notion of automorphism from~\cite[Definition 2.5]{Parreau_immeubles}, since by~\cite[Proposition 2.5]{Parreau_immeubles} the latter is necessarily type-preserving, and we do not wish to impose this restriction.  

Now let $G$ be a finitely generated group of automorphisms of $X$ such that every element of $G$ fixes a point of~$X$.  
In Section~\ref{sec:distance} we prove a result, Corollary~\ref{distance}, concerning the realisation of distances between fixed point sets of finitely generated subgroups of $G$. We next establish several reductions, in Section~\ref{sec:reductions}. 
Then in Section~\ref{sec:constructions}, assuming $G$ has two proper finitely generated subgroups whose fixed point sets are nonempty and disjoint, we construct an element $g \in G$, a sequence of points $\{ a_i \}$ in $X$ with $a_{2k} = g^k a_1$ for all $k \geq 1$, and a point $\xi$ in $\partial X$, the visual boundary of~$X$.   In Sections~\ref{sec:busemann} and~\ref{sec:end} we show, using these constructions and Busemann functions, that the sequence $\{ a_i \}$ is unbounded, hence $g$ must be hyperbolic, a contradiction.  As explained in Section~\ref{sec:constructions}, combining this with an induction completes the proof of Theorem~\ref{thm:main}.

\subsection{Distance between fixed point sets}\label{sec:distance}

The main result in this section is Corollary~\ref{distance}, which will be essential to our constructions in Section~\ref{sec:constructions}.  

The following lemma and its proof were kindly provided to us by a referee, and shorten our initial argument for Corollary~\ref{distance}.
\begin{lemma} \label{CL-argument}
Let $Y$ be a complete finite-dimensional $\CAT(0)$ space and let $A$ and $B$ be two nonempty closed convex subsets.  Let $d = d(A,B)$.
Then either there exist points $a\in A,b\in B$ such that $d(a,b)=d=d(A,B)$, or there exists $\xi\in \partial A\cap \partial B$.
\end{lemma}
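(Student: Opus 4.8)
The plan is to reformulate the problem as minimization of the convex function $f\colon A\to\R_{\geq 0}$, $f(a)=d(a,B)$, whose infimum over $A$ equals $d=d(A,B)$. For $\epsilon>0$ set $F_\epsilon=\{a\in A: f(a)\leq d+\epsilon\}$; since $f$ is convex and continuous, each $F_\epsilon$ is a nonempty closed convex subset of $A$, and the family is nested decreasing as $\epsilon\downarrow 0$. The dichotomy in the statement will correspond to whether or not some $F_\epsilon$ is bounded. I would first treat the bounded case: if $F_{\epsilon_0}$ is bounded for some $\epsilon_0$, then $(F_{1/n})_n$ is a descending sequence of nonempty bounded closed convex sets, and by the standard circumcenter argument in a complete $\CAT(0)$ space (the circumradii decrease and the circumcenters form a Cauchy sequence) the intersection $\bigcap_n F_{1/n}$ is nonempty. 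Any $a^*$ in this intersection satisfies $f(a^*)=d$, so setting $b^*=\proj_B(a^*)$ gives $d(a^*,b^*)=d(a^*,B)=d$, which is the first alternative.

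It remains to treat the case that every $F_\epsilon$ is unbounded, where I expect to produce $\xi\in\partial A\cap\partial B$. Since $Y$ is complete and finite-dimensional, each unbounded closed convex set $F_\epsilon$ contains a geodesic ray, so $\partial F_\epsilon\neq\emptyset$, and $\partial F_\epsilon$ is a closed $\pi$-convex subset of the Tits boundary $\partial_T Y$, with $\partial F_\epsilon\supseteq\partial F_{\epsilon'}$ for $\epsilon\geq\epsilon'$. The main obstacle is to show $\bigcap_\epsilon\partial F_\epsilon\neq\emptyset$: a descending family of nonempty closed convex subsets of a $\CAT(1)$ space need not have a common point in general, and it is precisely here that finite-dimensionality is essential. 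I would invoke the results of Caprace--Lytchak~\cite{CL10} on the Tits boundary of a complete finite-dimensional $\CAT(0)$ space --- namely that $\partial_T Y$ is then a complete finite-dimensional $\CAT(1)$ space in which such descending intersections of closed convex sets are nonempty (via circumcenters) --- to obtain a point $\xi\in\bigcap_\epsilon\partial F_\epsilon$. Since $F_\epsilon\subseteq A$ for every $\epsilon$, this already gives $\xi\in\partial A$.

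Finally I would verify that $\xi\in\partial B$. Fix $o\in F_1$ and let $\rho\colon[0,\infty)\to Y$ be the geodesic ray from $o$ representing $\xi$. Because $\xi\in\partial F_1$ and $F_1$ is closed and convex, the ray $\rho$ lies entirely in $F_1$, so $d(\rho(t),B)\leq d+1$ for all $t$; thus $\rho$ stays within a bounded neighbourhood of $B$. To conclude, set $c_t=\proj_B(\rho(t))\in B$; since the projection is distance non-increasing and $d(o,B)\leq d+1$, one has $d(c_0,c_t)\geq t-2(d+1)$, so $c_t\to\infty$, while the geodesics $[c_0,\rho(t)]$ converge to the ray $[c_0,\xi)$ and $c_t$ stays within $d+1$ of $\rho(t)$. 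A standard $\CAT(0)$ comparison then shows $[c_0,c_t]\to[c_0,\xi)$ uniformly on compacta; as each $[c_0,c_t]$ lies in the closed convex set $B$, the limit ray $[c_0,\xi)$ lies in $B$, giving $\xi\in\partial B$ and hence $\xi\in\partial A\cap\partial B$. The step I expect to be genuinely delicate is the nonemptiness of $\bigcap_\epsilon\partial F_\epsilon$, since this is where the hypothesis of finite dimension enters in an essential way.
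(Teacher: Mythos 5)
Your proof follows the same skeleton as the paper's: you form the sublevel sets $F_\epsilon = A\cap\{y: d(y,B)\le d+\epsilon\}$ (the paper's $A_n = A\cap\mathcal{N}_{d+1/n}(B)$), split into two cases, and finish by noting that a ray in $F_1$ asymptotic to $\xi$ stays at bounded distance from $B$, whence $\xi\in\partial A\cap\partial B$. Your bounded case and your final verification that $\xi\in\partial B$ are sound. However, the step you yourself flag as delicate --- nonemptiness of $\bigcap_\epsilon \partial F_\epsilon$ --- rests on a false principle. It is \emph{not} true that in a complete finite-dimensional $\CAT(1)$ space every descending family of nonempty closed convex subsets has a common point, and finite-dimensionality does not rescue this. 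Counterexample: the Tits boundary of the hyperbolic plane (or of any tree), which is a complete $\CAT(1)$ space of geometric dimension $0$ in which all pairwise distances equal $\pi$; hence \emph{every} subset is closed and $\pi$-convex, and nested families with empty intersection are trivial to produce. The circumcenter technique of Caprace--Lytchak in the $\CAT(1)$ setting requires an intrinsic radius bound (at most $\pi/2$), and the sets $\partial F_\epsilon$ carry no such bound a priori. So the crux of your case 2 is a genuine gap.

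The correct tool --- and this is exactly what the paper does --- is \cite[Theorem 1.1]{CL10} applied in the $\CAT(0)$ space $Y$ itself to the filtering family $\{F_\epsilon\}$: either $\bigcap_\epsilon F_\epsilon\neq\emptyset$, or $\bigcap_\epsilon \partial F_\epsilon$ is nonempty (and of intrinsic radius at most $\pi/2$, which is precisely the extra conclusion that makes circumcenter arguments at infinity possible; it is an output of the theorem, not an input you can supply). Note also that the dichotomy governing that theorem is emptiness of $\bigcap_\epsilon F_\epsilon$, not unboundedness of the $F_\epsilon$: in your case 2 one may perfectly well have $\bigcap_\epsilon F_\epsilon\neq\emptyset$ (e.g.\ $A$, $B$ two closed half-planes of $\R^2$ meeting along a line), and then even the corrected theorem gives no information about boundaries at infinity --- one must simply fall back on the first alternative via the projection argument of your case 1. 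With these two repairs (replace the bounded/unbounded split by the empty/nonempty split of $\bigcap_\epsilon F_\epsilon$, and replace the $\CAT(1)$ intersection claim by a direct application of \cite[Theorem 1.1]{CL10} in $Y$), your argument becomes the paper's proof.
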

\begin{proof}
For any $R\geq 0$, the $R$-neighbourhood $\mathcal{N}_R(B) =\{y\in Y\mid d(y,B)\leq R\}$
is a closed convex subset of $Y$. Therefore setting $A_n = A \cap \mathcal{N}_{d+\frac{1}{n}}(B)$ for all integers $n>0$, we obtain
a descending chain of closed convex subsets of $A$. If the intersection $\cap_{n} A_n$ is nonempty, then any point $\alpha$ contained in it satisfies $d(\alpha,B) = d =d(A,B)$. Otherwise, \cite[Theorem 1.1]{CL10} ensures the existence
of a point $\xi\in \cap_n \partial A_n$. Now the geodesic ray $[a_1,\xi)$ emanating from any point $a_1\in A_1$ is at bounded distance from $B$ by the definition of $A_1$, and so in particular we have $\xi \in \partial A \cap \partial B$.
\end{proof}

We now set up some further background needed for the proof of Corollary \ref{distance}. The following result is likely known to experts, but we could not find it stated explicitly in the literature.

\begin{lemma}\label{fixedset-apartment}
The fixed point set of a finitely generated group of automorphisms of $X$ intersects each apartment of $X$ in a finite intersection of closed half-apartments.
\end{lemma}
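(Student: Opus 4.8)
The plan is to analyse $S := \Fix(G)\cap\mathbb{A}$ as a convex subset of the Euclidean plane $\mathbb{A}$ and to show that its boundary is supported on walls. First I would record that $S$ is closed and convex: each $g\in G$ is an isometry of the $\CAT(0)$ space $X$, so $\Fix(g)$ is closed and convex, and $S=\mathbb{A}\cap\bigcap_{g\in G}\Fix(g)$ is an intersection of closed convex sets with the flat $\mathbb{A}\cong\mathbb{R}^2$. The degenerate outcomes (where $S$ is empty, a single point, or all of $\mathbb{A}$) can be written as finite intersections of half-apartments directly, so I may assume $S$ is a nondegenerate closed convex set with nonempty boundary $\partial S$ in $\mathbb{A}$.

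The heart of the argument is local, carried out in the links. Fix $x\in\partial S$ and let $a_x=\link_x\mathbb{A}$, a great circle inside the spherical building $\Delta_x=\link_x X$; the wall-directions at $x$ are exactly the vertices of $\Delta_x$ lying on $a_x$, which cut $a_x$ into the chambers of an apartment of $\Delta_x$. I would first identify the space of directions $C_x:=\Sigma_x S\subseteq a_x$ with the set of $G$-fixed points of $a_x$. A direction into $S$ is fixed, since a geodesic into $\Fix(G)$ is fixed pointwise; conversely, if $\bar v\in a_x$ is fixed by $G$ then, using the uniqueness of geodesic germs in an affine building, each generator of $G$ fixes an initial segment of the geodesic from $x$ in direction $\bar v$, and since $G$ is finitely generated a common such segment lies in $S$. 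Hence $C_x$ is a closed arc consisting of $G$-fixed directions.

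The crux is to show that the endpoints of the arc $C_x$ are wall-directions. Suppose an endpoint $\bar v$ were interior to a chamber $D=[w_1,w_2]$ of $\Delta_x$, so that $\bar v$ is not a wall-direction. Since $\bar v$ is fixed and the action is type-preserving, $G$ stabilises $D$ and fixes its two panels $w_1,w_2$; by convexity of the fixed set in the $\CAT(1)$ circle $a_x$, the whole arc $[w_1,w_2]$ is then fixed, so $C_x$ extends strictly beyond $\bar v$, contradicting that $\bar v$ is an endpoint. Thus every supporting direction of $S$, at every boundary point, is a wall-direction. This step, and in particular its reliance on type-preservation, is the main obstacle: for a type-reversing automorphism a chamber $D$ can be stabilised while its two panels are interchanged (fixing only the midpoint $\bar v$), which would permit a non-wall supporting direction, so one must invoke the reduction to type-preserving actions. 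It is also the step that must be phrased entirely in the links in order to cover the nondiscrete $\mathbb{R}$-building case, where there are no positive-dimensional chambers of $X$ available to fix pointwise.

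Finally I would globalise. Because the root system is of type $A_2$ or $C_2$, there are only finitely many wall-directions; a closed convex subset of $\mathbb{R}^2$ all of whose boundary tangent directions lie in a fixed finite set of directions is a (possibly unbounded) convex polygon with finitely many edges, each edge lying on a wall. Taking, for each supporting wall, the closed half-apartment bounded by it that contains $S$, one obtains $S$ as the intersection of these finitely many closed half-apartments, which is the assertion of the lemma.
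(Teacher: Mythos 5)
Your proposal takes a genuinely different route from the paper: the paper reduces to a single automorphism $g$ and quotes \cite[Proposition 9.1]{Rousseau} (see also \cite[Proposition 2.14]{Parreau}) that $I=\mathcal{A}\cap g(\mathcal{A})$ is a finite intersection of closed half-apartments on which $g$ acts like an element of $W$, so that $\Fix(g)\cap\mathcal{A}$ is the intersection of $I$ with the fixed set of a $W$-element (an intersection of actual walls). Your convexity-plus-links argument bypasses that citation, but it has two genuine gaps. The first is the type-preservation issue you yourself flag, and your proposed repair does not exist: the lemma is stated for automorphisms in the paper's sense, which are deliberately \emph{not} assumed type-preserving (the paper explicitly declines to impose this, in contrast with \cite[Definition 2.5]{Parreau_immeubles}), and passing to the finite-index type-preserving subgroup $G'\leq G$ only \emph{enlarges} the fixed set, since $\Fix(G)\subseteq\Fix(G')$; knowing the lemma for $G'$ therefore says nothing about $\Fix(G)\cap\mathcal{A}$. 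This is not a removable technicality in your approach: as you note, a duality-type automorphism can stabilise a link-chamber while swapping its panels, which is exactly the configuration your crux step must exclude. In the paper this work is done by the quoted proposition, not by a reduction.

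The second gap is in the globalisation, which conflates ``directions of walls'' with ``walls'' and dismisses the degenerate cases too quickly. The lemma sits \emph{before} the reductions of Section~\ref{sec:reductions}, so it must hold for discrete buildings, where walls form a discrete family, links of non-special points are not spherical buildings of type $A_2$ or $C_2$ (so your identification of wall-directions with vertices of $\Delta_x$ on $a_x$ is unavailable at points interior to panels or chambers, and at the non-special vertices in type $\tilde{C}_2$), and a single point or a segment is a finite intersection of closed half-apartments only if suitable transverse walls actually pass through it. Concretely, your boundary-direction criterion is satisfied by sets that are \emph{not} finite intersections of closed half-apartments: a one-point set at a non-vertex point, or a ray lying on a wall whose endpoint has no transverse wall through it (at such an endpoint the space of directions of $S$ is a single wall-direction, so your local test is passed vacuously). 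Excluding these requires additional arguments in the ambient building -- e.g.\ that a type-preserving automorphism fixing a point of a panel or chamber fixes its whole carrier pointwise, forcing boundary points of $S$ onto genuine walls and vertices -- and these arguments are absent from your sketch. Your two-dimensional case does go through, but only once ``wall-direction at $x$'' is justified to mean ``direction of an actual wall through $x$,'' which is precisely what needs proof at non-special points of a discrete building.
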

\begin{proof} 
Since the group is finitely generated it suffices to prove the result for an individual automorphism $g$ of $X$.  Let $\App$ be an apartment of $X$.  
Then by \cite[Proposition 9.1]{Rousseau} (see also \cite[Proposition 2.14]{Parreau}), $I:=\App\cap g(\App)$ is a finite intersection of closed half-apartments bounded by walls, 
and moreover $g$ acts like an element of $W$ on $I$.  Now the fixed point set of any element of $W$ acting on an entire apartment is also a finite intersection of closed half-apartments. Therefore the intersection of this fixed point set with $I$ is also such a finite intersection, proving the lemma.
\end{proof}

Given a panel $p_\infty$ of the spherical building at infinity of $X$, the corresponding panel tree $T = T(p_\infty)$ (see, for example, \cite{Tits86}) has as points the equivalence classes of sector panels belonging to the same parallelism class $p_\infty$, where the equivalence relation $\mathcal{R}$ is defined by intersecting in a half-line ($\mathcal{R}$ is transitive since $\CAT(0)$-spaces are uniquely geodesic \cite[Proposition II.1.4]{BH}).  
Let $t_a$ and $t_b$ be two points of $T$ and let $r_a$ and $r_b$ be representatives of $t_a$ and $t_b$, respectively. By \cite[Lemma 4.1]{MSHVM} there exists an apartment containing subrays $r_a'$ of $r_a$ and $r_b'$ of $r_b$. The distance $d(t_a,t_b)$ is defined to be the distance between the parallel halflines $r_a'$ and $r_b'$. This definition is clearly independent of the choices made.

Let $\psi$ be the map which sends a sector panel to its equivalence class with respect to $\mathcal{R}$. We define a map $\phi:X\to T$ as follows. Given a point $x \in X$, let $S_x$ be the (unique) sector panel through $x$ which has $p_\infty$ in its boundary, and define $\phi(x)=\psi(S_x)$.

If a group $G$ acts on $X$ and fixes $p_\infty$ then it has a naturally induced action on $T$. Indeed, for $t\in T$ we define $g(t)$ to be the equivalence class of sector panels which is the image under $g$ of the equivalence class of sector panels corresponding to $t$.

\begin{lemma}\label{fix-on-wall-tree}
Let $G$ be a finitely generated group of elliptic automorphisms of $X$ fixing $p_\infty$.  Then the fixed point set in $T$ of $G$ is the image in $T$ under $\phi$ of the fixed point set in $X$ of $G$.
\end{lemma}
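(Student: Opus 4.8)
The plan is to prove the two inclusions $\phi(\Fix(G)) \subseteq \Fix_T(G)$ and $\Fix_T(G) \subseteq \phi(\Fix(G))$ separately, writing $\Fix(G)$ and $\Fix_T(G)$ for the fixed point sets of $G$ in $X$ and in $T$. First I would record that $\phi$ is $G$-equivariant. For $g \in G$ and $x \in X$, the automorphism $g$ sends the unique sector panel $S$ through $x$ with $p_\infty$ in its boundary to a sector panel through $gx$; as $g$ fixes $p_\infty$, this image again has $p_\infty$ in its boundary, so by uniqueness it is the sector panel defining $\phi(gx)$. Since the induced action on $T$ is $g(\psi(S)) = \psi(gS)$, this gives $\phi(gx) = g\,\phi(x)$, and the inclusion $\phi(\Fix(G)) \subseteq \Fix_T(G)$ is then immediate.

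The substantive direction is $\Fix_T(G) \subseteq \phi(\Fix(G))$. Fix $t \in \Fix_T(G)$ and put $C := \phi^{-1}(t)$. It is nonempty (any point of a sector panel representing $t$ lies in it), closed, and $G$-invariant, since equivariance and $gt = t$ give $g(C) = \phi^{-1}(gt) = C$. The heart of the argument is to show that $C$ is a closed convex subset which, with the induced metric, is an $\R$-tree. For convexity, any two points of $C$ lie in a common apartment $\mathcal{A}$; the restriction of $\phi$ to $\mathcal{A}$ collapses $\mathcal{A}$ onto a geodesic of $T$ with each fibre a single wall of $\mathcal{A}$, so $C \cap \mathcal{A}$ is a single wall and hence contains the geodesic between the two points. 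For the tree property, $C$ is covered by these walls $C \cap \mathcal{A}$, any two of which intersect in a convex, hence connected, subset; because $X$ is $\CAT(0)$ there are no embedded circles, so every geodesic triangle in $C$ is a tripod and $C$ is an $\R$-tree.

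With these facts I would conclude as follows. Each $g \in G$ is elliptic on $X$, and an isometry of the $\R$-tree $C$ is either elliptic or admits an axis; as $C$ is convex in $X$, such an axis would be a geodesic line of $X$ translated by $g$, forcing $g$ to be hyperbolic on $X$ and contradicting ellipticity. Hence every element of $G$ is elliptic on $C$. Since $G$ is finitely generated, the fixed point theorem for groups of elliptic isometries of trees (Serre, extended to $\R$-trees by Morgan--Shalen; see \cite{Serre,MorganShalen}) produces a point $x \in C$ fixed by all of $G$. Then $x \in \Fix(G)$ and $\phi(x) = t$, so $t \in \phi(\Fix(G))$, which completes the reverse inclusion.

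The main obstacle will be establishing that the fibre $C$ is genuinely an $\R$-tree rather than merely a closed convex set: this is exactly what licenses the tree fixed point theorem, which fails for general $\CAT(0)$ fibres, and it is where the panel-tree machinery (the behaviour of $\phi$ on apartments, as in \cite{Tits86,MSHVM}) must be invoked. I would also take $X$ to be complete, so that both the elliptic/axis dichotomy for isometries of $C$ and the Morgan--Shalen theorem apply.
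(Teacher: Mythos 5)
Your first inclusion (equivariance of $\phi$, giving $\phi(\Fix(G))\subseteq \Fix_T(G)$) is fine and agrees with the paper. The reverse inclusion, however, rests on a claim that is false: the fibre $C=\phi^{-1}(t)$ is in general \emph{not} convex in $X$, and an apartment containing two given points of $C$ need not meet $C$ in a wall. The fibres of $\phi|_{\mathcal{A}}$ are walls only when $p_\infty\in\partial\mathcal{A}$, and there may be \emph{no} apartment through the two points with $p_\infty$ in its boundary. Concretely, in the discrete $\tilde{A}_2$ building of $SL_3(K)$, $K=\mathbb{F}_q((t))$, let $p_\infty$ be the point $Ke_1$ at infinity and consider the vertices $x_1=[\mathcal{O}e_1\oplus\mathcal{O}e_2\oplus\mathcal{O}e_3]$ and $x_2=[\mathcal{O}e_1+\mathcal{O}(e_2+t^{-1}e_1)+\mathcal{O}e_3]$. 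Their sector panels toward $Ke_1$ coincide from the first step on (both pass through $[t^{-1}\mathcal{O}e_1+\mathcal{O}e_2+\mathcal{O}e_3]$), so $x_1,x_2\in C$ for the same $t$; but their relative position has elementary divisors $(t^{-1},1,t)$, and computing quotient norms on $K^3/Ke_1$ shows that the midpoint $z$ of the geodesic $[x_1,x_2]$ has quotient norm $\min\bigl(v(\mu_2)-\tfrac{1}{2},\,v(\mu_3)\bigr)$, which is not homothetic to $\min\bigl(v(\mu_2),v(\mu_3)\bigr)$. Hence $\phi(z)\neq\phi(x_1)$, so $z\notin C$ and $C$ is not convex; moreover no apartment containing $x_1$ and $x_2$ can have $p_\infty$ at infinity, since such an apartment would force the whole segment $[x_1,x_2]$ to lie in a single wall inside $C$. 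This gap propagates through the rest of your argument: the claim that $C$ is an $\R$-tree is argued via geodesics of $X$ lying in $C$, and, more seriously, the step ``every $g\in G$ is elliptic on $C$ because an axis in $C$ would be a geodesic line of $X$'' uses convexity, so the hypotheses of the Serre/Morgan--Shalen fixed point theorem are never verified.

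The statement can still be rescued along your lines, but only by an observation that makes the tree fixed-point theorem unnecessary: $C$ with its \emph{intrinsic} metric is a tree all of whose rays merge toward $p_\infty$, so a hyperbolic axis in $C$ would contain a genuine geodesic ray of $X$ toward $p_\infty$ that $g$ translates along itself, contradicting ellipticity of $g$ on $X$. This is exactly the paper's (much shorter) proof: take a single half-line $L$ representing $t$; for each $g\in G$ the intersection $L\cap L^g$ is a half-line which an elliptic $g$ must fix \emph{pointwise} (otherwise $g$ or $g^{-1}$ translates it into itself and $g$ is hyperbolic); intersecting the finitely many half-lines $L\cap L^{g_i}$ over a generating set yields a half-line fixed pointwise by $G$, and any point on it maps to $t$ under $\phi$. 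No fibre of $\phi$, and no convexity, is ever needed.
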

\begin{proof}

If $x\in X$ is fixed by $G$ then since $G$ also fixes $p_\infty$ it fixes the (unique) sector panel through $x$ which has $p_\infty$ in its boundary. Hence by definition $\phi(x)$ is fixed by $G$.
Conversely, if $t \in T$ is fixed by $G$ consider a half-line $L$ belonging to a sector panel in the pre-image $\psi^{-1}(t)$ and $g\in G$.  Then $L^g \cap L$ is a half-line, say $M_g$. Since $g$ is elliptic $M_g$ is pointwise fixed by $g$, as otherwise either $g$ or $g^{-1}$ would map $M_g$ into a proper subset of itself, thus $g$ would act as a translation with axis containing $M_g$ and hence be hyperbolic. Now if $G = \langle g_1,\dots,g_n \rangle$ then the half-line $M = \cap_{i=1}^n M_{g_i}$ is fixed pointwise by $G$. Let $x$ be a point on $M$, then $\phi(x)=t$ and the lemma is proved.
\end{proof}

By \cite[Corollary 2.19]{Parreau}, since we are working with the maximal system of apartments, the $\CAT(0)$ boundary of $X$ is the geometric realisation of its spherical building at infinity.

\begin{corollary}\label{distance} Let $G$ be a finitely generated group of type-preserving automorphisms of 
a complete 2-dimensional Euclidean building $X$.
If $A:= \Fix(G_A)$ and $B:=\Fix(G_B)$ are two nonempty fixed point sets of finitely generated subgroups $G_A$ and $G_B$, then there exist points $\alpha^\star \in A$ and $\beta^\star \in B$ such that $d(\alpha^\star,\beta^\star) = d(A,B)$. 
\end{corollary}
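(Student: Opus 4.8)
The plan is to dispose of the easy case via the dichotomy of Lemma~\ref{CL-argument}, and in the remaining case to transport the problem into a panel tree~$T$ — where the distance between closed convex subsets is always realised — and then to lift the realising configuration back to~$X$. First I would record that $A$ and $B$ are closed and convex, being fixed point sets of groups of isometries of the $\CAT(0)$ space $X$, and that $X$ is complete and finite-dimensional, so Lemma~\ref{CL-argument} applies to the pair $(A,B)$. If its first alternative holds we are done; otherwise there is a point $\xi\in\partial A\cap\partial B$. Since $G_A$ fixes $A$ pointwise it fixes every geodesic ray contained in $A$ and hence fixes $\xi$, and likewise $G_B$ fixes $\xi$. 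As $\partial X$ is the spherical building at infinity and $G_A,G_B$ are type-preserving, each fixes pointwise the closed facet of $\partial X$ containing $\xi$ (a type-preserving isometry cannot exchange the two ends of an edge, since these have distinct types), so both fix a common panel $p_\infty$ at infinity. Moreover, by Lemma~\ref{fixedset-apartment} the fixed point sets meet each apartment in a finite intersection of closed half-apartments, so $\partial A$ and $\partial B$ are subcomplexes of $\partial X$; hence the whole closed facet, and in particular $p_\infty$, lies in $\partial A\cap\partial B$, and both $A$ and $B$ contain geodesic rays asymptotic to $p_\infty$.

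I would then pass to the panel tree $T$ for $p_\infty$. Because $A,B\neq\emptyset$, the groups $G_A,G_B$ are finitely generated groups of elliptic automorphisms fixing $p_\infty$, so Lemma~\ref{fix-on-wall-tree} identifies $\phi(A)$ and $\phi(B)$ with the fixed point sets of $G_A$ and $G_B$ in~$T$; these are nonempty closed convex subtrees of the complete $\R$-tree $T$. The crucial auxiliary fact is that $\phi$ is distance non-increasing: for $x,y\in X$ the rays $S_x,S_y$ from $x,y$ towards $p_\infty$ are asymptotic, so $t\mapsto d(S_x(t),S_y(t))$ is convex and bounded, hence non-increasing from its initial value $d(x,y)$; its limit is by definition of the tree metric equal to $d_T(\phi(x),\phi(y))$, whence $d_T(\phi(A),\phi(B))\le d(A,B)$.

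Finally I would realise the distance in $T$ and lift it. Applying Lemma~\ref{CL-argument} again, now to $T$ and the subtrees $\phi(A),\phi(B)$, either $d_T(\phi(A),\phi(B))$ is attained by a pair of points, or $\phi(A)$ and $\phi(B)$ share a boundary point of $T$; in the latter case the two subtrees each contain a ray to a common end of the $\R$-tree, these rays eventually coincide, and so the subtrees meet and the distance~$0$ is attained. Either way there are $t_A^\star\in\phi(A)$ and $t_B^\star\in\phi(B)$ with $d_T(t_A^\star,t_B^\star)=d_T(\phi(A),\phi(B))=:\delta$. Using the fixed half-line produced in the proof of Lemma~\ref{fix-on-wall-tree}, I would choose representatives $M_A\subseteq A$ and $M_B\subseteq B$ of the classes $t_A^\star$ and $t_B^\star$; by \cite[Lemma~4.1]{MSHVM} these have parallel subrays in a common apartment at distance $\delta$, and the endpoints of a perpendicular between them give $\alpha^\star\in A$ and $\beta^\star\in B$ with $d(\alpha^\star,\beta^\star)=\delta$. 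Together with the Lipschitz bound this yields $d(A,B)=\delta=d(\alpha^\star,\beta^\star)$, as required.

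The main obstacle is precisely this lift. I must guarantee that the tree-realising classes $t_A^\star,t_B^\star$ admit representatives genuinely contained in $A$ and $B$ — not merely half-lines whose $\phi$-image is correct — which is exactly what the fixed half-line constructed in Lemma~\ref{fix-on-wall-tree} supplies, and that the resulting parallel half-lines realise $d(A,B)$ rather than merely bounding it above, for which the distance non-increasing property of $\phi$ is indispensable (it is what upgrades the inequality $d_T(\phi(A),\phi(B))\le d(A,B)$ to an equality). The two applications of Lemma~\ref{CL-argument} — first in $X$ to produce the common boundary point $\xi$, then in the rank-one space $T$ to force realisation even when $\phi(A)$ and $\phi(B)$ are only asymptotic — are the conceptual heart of the reduction, with the verification that $\phi$ is $1$-Lipschitz and that the lifted half-lines lie in $A$ and $B$ being the technically delicate points.
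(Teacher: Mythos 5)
Your proposal follows essentially the same route as the paper's proof: Lemma~\ref{CL-argument} yields either realisation or a common boundary point $\xi$; type-preservation turns $\xi$ into a panel $p_\infty$ at infinity fixed by both groups; the problem is transported to the panel tree via Lemma~\ref{fix-on-wall-tree}; and the realising pair in the tree is lifted back to $X$ using fixed half-lines and the definition of the tree metric. The only organisational differences are that the paper treats the case where $\xi$ is not a panel separately (concluding $A\cap B\neq\emptyset$ directly from sectors in $A$ and $B$ and \cite[Lemma 4.1]{MSHVM}), and that it invokes \cite[1.1, 1.3]{Culler-Morgan} for the existence of a shortest geodesic between the disjoint closed fixed subtrees, rather than re-applying Lemma~\ref{CL-argument} to $T$.

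Two points in your write-up need repair, though neither is fatal to the strategy. First, your justification of the Lipschitz bound contains a false claim: for asymptotic rays $S_x,S_y$ the limit of $d(S_x(t),S_y(t))$ is in general \emph{strictly larger} than the distance between the parallel subrays. (Take $y$ a point on the ray $S_x$ itself: then $\phi(x)=\phi(y)$, so the tree distance is $0$, while $d(S_x(t),S_y(t))=d(x,y)$ for all $t$.) What is true, and is all you need, is the inequality $d_T(\phi(x),\phi(y))\le \lim_{t}d(S_x(t),S_y(t))\le d(x,y)$, because the tree distance is an infimum over pairs of points on the two half-lines while the limit is the distance between one particular such pair. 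Second, your application of Lemma~\ref{CL-argument} to $T$ requires $T$ to be a \emph{complete} finite-dimensional $\CAT(0)$ space, and you assert completeness of the panel tree without justification. This can be sidestepped entirely: the dichotomy you need in a tree (disjoint closed subtrees admit a bridge realising their distance, intersecting ones realise distance $0$) is elementary, or citable from Culler--Morgan as the paper does, and requires no completeness hypothesis on $T$.
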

\begin{proof}
By Lemma \ref{CL-argument} we may assume that there exists $\xi\in\partial A\cap \partial B$.   If $\xi$ is not a panel, by Lemma \ref{fixedset-apartment} we may consider sectors $S_a\subseteq A$ and $S_b \subseteq B$ both containing $\xi$ in their boundary and based at $a\in A$ and $b\in B$ respectively. By \cite[Lemma 4.1]{MSHVM} there exists an apartment containing subsectors of $S_a$ and $S_b$ and hence $S_a\cap S_b \neq \emptyset$, thus $A\cap B \neq \emptyset$. Suppose thus that $\xi$ is a panel. By \cite[1.3]{Culler-Morgan} the subsets $T_A$ and $T_B$ of the panel tree $T$ corresponding to $\xi$ which are fixed by $A$ and $B$ respectively are disjoint nonempty closed subtrees of $T$.
Hence by \cite[1.1]{Culler-Morgan} there is a unique shortest geodesic in $T$ having its initial point $t_a$ in $T_A$ and its terminal point $t_b$ in $T_B$.  
Note that by Lemma \ref{fix-on-wall-tree} and \cite[Proposition II.2.2]{BH} no points in $A$ and $B$ can be at distance less than $d(t_a,t_b)$. By definition of distance in the panel tree $d(A,B) = d(t_a,t_b)$ and the proof is complete.\end{proof}

\subsection{Reductions}\label{sec:reductions}

\begin{lemma}\label{special}
We may assume $X$ is an $\mathbb{R}$-building in which each point is a special vertex.
\end{lemma}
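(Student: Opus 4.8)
The plan is to embed the given building $X$, in a $G$-equivariant way, into a single building of the required form and then transport a fixed point back. Proving Theorem~\ref{thm:main} under the extra hypothesis that $X$ is an $\mathbb{R}$-building with every point special means exactly that the conclusion for such buildings implies the conclusion for arbitrary $X$; so the content of the lemma is to construct such an equivariant bridge. I would first replace $X$ by its metric completion, which by Parreau's results \cite{Parreau_immeubles} is again an affine building of the same type on which $G$ acts with every element still elliptic; this costs nothing and will be needed to keep the image of $X$ closed later.

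Next I would fix a non-principal ultrafilter $\omega$ and pass to the ultrapower $X^\omega$. By the work of Kleiner--Leeb \cite{KL} and the second author \cite{Struyve}, $X^\omega$ is again an affine building of type $\tilde{A}_2$ or $\tilde{C}_2$; the crucial point is that the passage from $X$ to $X^\omega$ replaces the original (possibly discrete) wall arrangement by a densely ordered one, so that $X^\omega$ is a complete $\mathbb{R}$-building in which \emph{every} point is a special vertex. The diagonal map $\iota\colon X\to X^\omega$ is an isometric, $G$-equivariant embedding whose image is closed and convex (here completeness of $X$ is used). Since each generator of $G$ fixes a point of $X$, it fixes the image of that point, so the standing hypothesis of Theorem~\ref{thm:main} is inherited by the $G$-action on $X^\omega$.

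Granting the theorem for $\mathbb{R}$-buildings with all points special, $G$ then fixes some $\xi\in X^\omega$. To descend, I would invoke the nearest-point projection $\pi\colon X^\omega\to \iota(X)$ onto the closed convex, $G$-invariant subset $\iota(X)$. Because $\iota(X)$ is $G$-invariant and the projection commutes with isometries preserving it, $\pi$ is $G$-equivariant, so $\pi(\xi)$ is a point of $\iota(X)$ fixed by all of $G$; pulling back by $\iota$ yields a global fixed point of $G$ in $X$, as required.

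The main obstacle is building-theoretic rather than metric: one must check carefully, using \cite{KL,Struyve}, that $X^\omega$ really is an $\mathbb{R}$-building of the same type in which every point is special, since it is precisely the move from a discrete to a dense wall arrangement that makes all vertices special. The only metric subtlety is completeness, handled by the preliminary completion step, which guarantees that $\iota(X)$ is closed in $X^\omega$ so that the equivariant projection returns a genuine fixed point of the original building and not merely of some larger space.
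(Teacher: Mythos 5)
Your argument rests on the claim that passing to the ultrapower ``replaces the original (possibly discrete) wall arrangement by a densely ordered one,'' so that $X^\omega$ is an $\mathbb{R}$-building with every point special. This is not correct, and it inverts the actual logic. If $X$ is, say, a locally finite discrete building, then $X$ is a proper metric space and its metric ultrapower is isometric to $X$ itself: the ultrapower creates no new points, no new walls, and no new special vertices. What makes every point a special vertex is not a change of the underlying space but a change of the \emph{model}: by the Remarques in \cite[p.~6]{Parreau_immeubles} (see also \cite[Remark 6.3(d)]{Rousseau}), the same metric space $X$, with the same $G$-action, can be regarded as an affine building whose apartments are modelled on $(\mathbb{A},\widetilde{W})$, where $\widetilde{W}$ is the group of \emph{all} affine isometries of $\mathbb{A}$ with linear part $\overline{W}$. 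Since $\widetilde{W}$ contains all translations, every point of $X$ is then a special vertex. That re-modelling is the entire content of Lemma~\ref{special}: nothing metric changes, so no equivariant embedding, no descent, and no transport of fixed points is needed. To the extent that ultrapowers of Euclidean buildings carry a full affine Weyl group in the framework of \cite{KL}, it is because that framework already builds in this re-modelling; attributing the property to the ultrapower itself is circular.

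Your preliminary step contains a second genuine gap: it is not a result of Parreau, nor known in general, that the metric completion of an affine building is again an affine building of the same type. This open issue is precisely why the paper (following \cite{Struyve}, which relies on \cite{KL}) uses the \emph{ultrapower} rather than the completion to gain metric completeness, and that step is the business of the separate Lemma~\ref{complete}, not of Lemma~\ref{special}. There the descent from the ultrapower back to $X$ is done via \cite[Main Result 1]{Struyve} (a finitely generated group with bounded orbits on a possibly non-complete $\mathbb{R}$-building has a fixed point), rather than by nearest-point projection, exactly because the image of $X$ in the ultrapower need not be closed when $X$ is not complete. Your $\CAT(0)$ projection argument is sound as geometry, but it hangs on the unjustified completion step, and in any case it addresses a difficulty that Lemma~\ref{special} does not have.
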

\begin{proof}  By the last paragraph in the Remarques on~\cite[p. 6]{Parreau_immeubles} (see also~\cite[Remark 6.3(d)]{Rousseau}), we may regard $X$ as an affine building in which each apartment is modelled on the pair $(\Aff,\widetilde{W})$, where $\widetilde{W}$ is the group of all affine isometries of $\Aff$ whose linear part is $\overline{W}$.  Hence we may assume that $X$ is an $\mathbb{R}$-building in which every point is a special vertex.  We note that if a point $x \in X$ was not originally a special vertex, then the link of $x$ in this $\mathbb{R}$-building structure will be a spherical building in which at least some of the panels are only contained in $2$ chambers.  Thus in general the $\R$-building $X$ will not be thick.
\end{proof}

\begin{lemma}\label{complete} We may pass to the ultrapower of $X$, hence we may in particular assume that $X$ is metrically complete.
\end{lemma}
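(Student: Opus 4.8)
The plan is to realise $X_\omega$ as an ultralimit (ultrapower) of $X$ and to transport both the building structure and the $G$-action to it. Fix a non-principal ultrafilter $\omega$ on $\mathbb{N}$ and a basepoint $o\in X$, and let $X_\omega$ be the associated ultralimit of the constant sequence of pointed spaces $(X,o)$. Ultralimits of metric spaces are always metrically complete, so $X_\omega$ is complete; and by the ultrapower results of Kleiner--Leeb~\cite{KL} and the second author~\cite{Struyve}, $X_\omega$ is again an affine building of the same type ($\tilde{A}_2$ or $\tilde{C}_2$) as $X$, with apartments, facets, and boundary building obtained as ultralimits of the corresponding data in $X$. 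The diagonal map $\iota\colon X\to X_\omega$, $x\mapsto[(x,x,\dots)]$, is then an isometric, type-preserving embedding of buildings, and it is this identification of $X_\omega$ as a complete building of the correct type, rather than the completeness itself, that carries the weight of the cited references.

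Next I would transport the action. Each automorphism $g$ of $X$ induces $g_\omega$ on $X_\omega$ by $g_\omega[(x_n)]=[(gx_n)]$; this is well defined and isometric, and maps apartments to apartments and facets to facets because $g$ does so in each coordinate, so $g_\omega$ is again an automorphism of $X_\omega$. Since $g_\omega\circ\iota=\iota\circ g$, the embedding $\iota$ is $G$-equivariant, and $G$ acts by automorphisms on the complete building $X_\omega$. Moreover ellipticity is preserved: if $g\in G$ fixes some $y\in X$, then $g_\omega$ fixes $\iota(y)$, so every element of $G$ is elliptic on $X_\omega$. Consequently it suffices to prove Theorem~\ref{thm:main} for the complete building $X_\omega$, provided a global fixed point can be returned from $X_\omega$ to $X$.

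This descent is the principal point, and the step I expect to be the main obstacle. Suppose $G$ fixes a point $p=[(x_n)]\in X_\omega$. Writing $G=\langle g_1,\dots,g_k\rangle$ and $\phi(x)=\max_i d(x,g_i x)$ for the convex, $G$-invariant displacement function on $X$, the relations $g_{i,\omega}\,p=p$ force $\lim_\omega\phi(x_n)=0$, so $\phi$ has infimum $0$ on $X$ and $\mathrm{Fix}_X(G)=\phi^{-1}(0)$ is precisely the set on which this infimum is attained. I would argue that a minimising sequence for $\phi$ actually converges inside $X$: by Lemma~\ref{fixedset-apartment} the sets $\mathrm{Fix}_X(g_i)$ are nonempty and meet each apartment in a finite intersection of closed half-apartments, so the sublevel sets of $\phi$ are closed convex subsets with enough combinatorial structure to control a minimising sequence, and finite generation should then pin down a genuine common fixed point in $X$. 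Checking that this limit lies in $X$ itself, and not merely in its metric completion or in the larger space $X_\omega$, is the delicate part, since for a general incomplete $\CAT(0)$ space an almost-fixed point need not promote to an honest fixed point; it is exactly here that the building-theoretic input of Lemma~\ref{fixedset-apartment}, and if needed Corollary~\ref{distance}, must be used to rule out escape to the boundary.
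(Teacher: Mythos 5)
Your first half — constructing the ultrapower $X_\omega$, checking it is a complete building of the same type, and extending the $G$-action equivariantly along the isometric embedding — is exactly what the paper does, citing \cite[Lemma 4.4]{Struyve} (which rests on \cite{KL}). The gap is in the descent step, which you correctly flag as ``the principal point'' but then leave as a sketch. The paper closes it with one specific citation you are missing: \cite[Main Result 1]{Struyve}, a fixed point theorem asserting that a \emph{finitely generated} group of isometries of an $\R$-building (with no completeness hypothesis) having bounded orbits fixes a point. With that in hand the descent is immediate: if $G$ fixes a point of $X_\omega$, then every $G$-orbit in $X$ is bounded (the embedding is isometric and equivariant), and Struyve's theorem returns a fixed point in $X$ itself. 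No displacement-function analysis is needed.

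Your proposed substitute for this citation does not work as stated. First, a minimising sequence for the displacement function $\phi$ in an incomplete $\CAT(0)$ space need not converge in that space, and $\inf\phi=0$ does not imply $\phi^{-1}(0)\neq\emptyset$; this is precisely the failure mode you acknowledge, and Lemma~\ref{fixedset-apartment} (a statement about the shape of fixed point sets, which may a priori be empty for the whole group) gives no control over sublevel sets of $\phi$. Second, invoking Corollary~\ref{distance} here would be circular: that corollary is stated and proved only for \emph{complete} buildings — its proof goes through Lemma~\ref{CL-argument}, which uses \cite[Theorem 1.1]{CL10} and needs completeness — and the entire purpose of the present lemma is to reduce to the complete case, so it cannot be assumed. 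In short, the promotion of almost-fixed points to fixed points in the incomplete building is genuinely a theorem (Struyve's), not something recoverable from the other results in this paper, and your proof is incomplete without it.
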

\begin{proof}
By~\cite[Lemma 4.4]{Struyve} (which uses results from~\cite{KL}) the $\R$-building $X$ can be isometrically embedded in a metrically complete $\R$-building $X'$, the ultrapower of $X$.  Moreover~$X'$ has the same type as $X$, and the $G$-action on $X$ extends to $X'$.  Suppose $G$ fixes a point of $X'$. Then all $G$-orbits on~$X$ are bounded.  Hence as $G$ is finitely generated, by \cite[Main Result 1]{Struyve} $G$ fixes a point of $X$.
\end{proof}

\begin{lemma}\label{type}
Let $G$ be a group of automorphisms of a metrically complete affine building~$X$. 
If its type-preserving subgroup $G'$ fixes a point of $X$, then $G$ fixes a point of~$X$.
\end{lemma}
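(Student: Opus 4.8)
The plan is to exploit the fact that the type-preserving subgroup $G'$ is normal in $G$ with finite quotient, so that $G$ acts on the fixed point set of $G'$ (nonempty by hypothesis) through a finite group, to which a fixed point theorem applies. First I would record that the facets of $X$ carry only finitely many types in the sense used in this paper — namely the types of the facets of the fundamental Weyl chamber — forming a finite set $\mathcal{T}$, and that each automorphism of $X$ permutes $\mathcal{T}$. This gives a homomorphism $\tau\colon G \to \operatorname{Sym}(\mathcal{T})$ whose kernel is precisely the type-preserving subgroup $G'$. Hence $G'$ is normal in $G$, and $G/G' \cong \tau(G)$ is finite.

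Next I would consider $A := \Fix(G')$, which is nonempty by hypothesis. As the fixed point set of a group of isometries of the $\CAT(0)$ space $X$, the set $A$ is closed and convex: the fixed point set of each individual isometry is closed and convex because $X$ is uniquely geodesic, and $A$ is their intersection. Since $X$ is metrically complete, $A$ is then a complete $\CAT(0)$ space. Normality of $G'$ makes $A$ invariant under $G$, for if $g \in G$, $a \in A$ and $h \in G'$, then $h(ga) = g((g^{-1}hg)a) = ga$ because $g^{-1}hg \in G'$ fixes $a$, so $ga \in A$. Thus $G$ acts on $A$, with $G'$ acting trivially, and this action factors through the finite group $G/G'$.

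Finally, since $G/G'$ is finite every one of its orbits in $A$ is bounded, so I would invoke the Bruhat--Tits fixed point theorem \cite[Corollary II.2.8]{BH} to produce a point of $A$ fixed by $G/G'$, equivalently fixed by all of $G$. I do not expect any step here to be a serious obstacle; the only point needing care is the first one, namely checking that the paper's (nonstandard) notion of type yields a finite type set permuted by every automorphism, so that $G'$ genuinely has finite index in $G$. Once this is in hand, metric completeness of $X$ supplies completeness of $A$ and the fixed point theorem closes the argument.
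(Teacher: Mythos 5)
Your proof is correct and rests on essentially the same idea as the paper's: since $[G:G']$ is finite, orbits under $G$ of points fixed by $G'$ are bounded, and the Bruhat--Tits fixed point theorem applies (the paper applies it directly to the $G$-orbit of a point fixed by $G'$, while you apply it to the $G/G'$-action on $\Fix(G')$). Your extra steps --- normality of $G'$, $G$-invariance of $\Fix(G')$, and the explicit verification via the permutation action on the finite set of types that $[G:G']$ is finite --- are sound and make explicit what the paper simply asserts, but they do not change the underlying argument.
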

\begin{proof}
Assume $G'$ fixes $x \in X$. Since $[G:G']$ is finite, the $G$-orbit of $x$ is bounded, hence $G$ fixes a point by the Bruhat--Tits fixed point theorem~\cite[Proposition 3.2.4]{BT}.
\end{proof}

\subsection{Constructions}\label{sec:constructions}

By the results of Section~\ref{sec:reductions}, we may assume from now on that~$X$ is a metrically complete $\R$-building in which each point is a special vertex, and that the action of~$G$ on~$X$ is type-preserving.  

\begin{prop}\label{prop:hyp}  Suppose $G$ has two proper finitely generated subgroups $G_0$ and $G_1$ such that the respective fixed point sets $B_0:=\Fix(G_0)$ and $B_1:=\Fix(G_1)$ are nonempty and disjoint.  Then~$G$ contains a hyperbolic element.
\end{prop}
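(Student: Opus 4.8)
\emph{Proof strategy.} The plan is to produce a single element $g \in G$, a point $\xi \in \partial X$ and a point $x \in X$ for which the Busemann function $b_\xi$ is unbounded below along the orbit, i.e. $b_\xi(g^n x) \to -\infty$. Since Busemann functions are $1$-Lipschitz, this forces $d(x,g^n x)\to\infty$, so $g$ has an unbounded orbit and hence cannot be elliptic; as every automorphism of a complete affine building is semisimple \cite[Corollaire~4.2]{Parreau_immeubles}, $g$ is then hyperbolic. Throughout I use that, after the reductions of Section~\ref{sec:reductions}, $X$ is a complete $\R$-building in which every point is a special vertex and $G$ acts type-preservingly, so that the link of each point is a spherical building of type $A_2$ or $C_2$ on which the relevant stabiliser acts by type-preserving automorphisms.

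First I would apply Corollary~\ref{distance} to the nonempty disjoint closed convex sets $B_0,B_1$ (closed and convex by Lemma~\ref{fixedset-apartment} and completeness) to obtain $\alpha^\star\in B_0$, $\beta^\star\in B_1$ with $d(\alpha^\star,\beta^\star)=d(B_0,B_1)=:D>0$. Let $x_0\in\Delta:=\link(\alpha^\star)$ be the direction of the geodesic $[\alpha^\star,\beta^\star]$ at $\alpha^\star$, and let $p$ be a panel of $\Delta$ nearest to $x_0$ (in type $A_2$, also fix a chamber $c\ni x_0$ and take $p$ to be a panel of $c$). Since $G_0$ fixes $\alpha^\star$ it acts on $\Delta$, so I would invoke Lemma~\ref{lem:local-A1A1C2} or Lemma~\ref{lem:local-A2}. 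The crucial observation is that alternative~(2) is \emph{impossible}: a $G_0$-fixed panel $p'$ with $d(p',x_0)<\tfrac{\pi}{2}$ is a $G_0$-fixed direction of a wall-germ at $\alpha^\star$, so by convexity of $\Fix(G_0)$ the segment $[\alpha^\star,z]$ in that direction lies in $B_0$, and $\angle_{\alpha^\star}(z,\beta^\star)=d(p',x_0)<\tfrac{\pi}{2}$; the first variation formula (see \cite{BH}) then yields a point of $B_0$ strictly closer to $\beta^\star\in B_1$ than $\alpha^\star$, contradicting minimality of $D$. Hence alternative~(1) holds, giving $g_0\in G_0$ sending $p$ to an opposite panel $p^{g_0}$, with $d(p,g_0 x_0)\geq\tfrac{7\pi}{8}$ in type $C_2$ (resp. $\geq\tfrac{2\pi}{3}$ in type $A_2$).

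Next I would extract the geometric data. The opposite panels $p,p^{g_0}$ determine an apartment of $\Delta$, equivalently a flat apartment $\mathcal{A}_0\subseteq X$ through $\alpha^\star$ containing the corresponding wall directions, and I define $\xi\in\partial X$ to be the endpoint of the ray from $\alpha^\star$ in direction $p^{g_0}$ — the direction into which $g_0$ has thrown $x_0$, nearly antipodal to the direction of $\beta^\star$. Running the symmetric construction at $\beta^\star$ for $G_1$ produces $g_1\in G_1$, an apartment $\mathcal{A}_1\ni\beta^\star$ and the analogous large-angle bound. I then set $g:=g_1 g_0\in G$ and build inductively a sequence of points $x_0=\alpha^\star,x_1,x_2,\dots$ together with apartments $\mathcal{A}_0,\mathcal{A}_1,\dots$ in which consecutive apartments share a sector (half-apartment), using the building axioms to find apartments containing prescribed sector faces (as in the use of \cite[Lemma 4.1]{MSHVM} in Corollary~\ref{distance}), so that the rays toward $\xi$ remain coherent as $g$ shifts the configuration.

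The hard part will be the Busemann estimate of the final two sections: combining the large-angle bounds from the local lemmas with convexity and first variation to show $b_\xi(g^{n+1}x)\leq b_\xi(g^n x)-c$ for a fixed $c>0$ and all $n$. Two difficulties stand out. First, in type $A_2$ the raw bound of $\tfrac{2\pi}{3}$ only yields an angle of about $\tfrac{\pi}{2}$ at a single vertex, which is borderline, so the positive drift must be extracted more carefully than by a one-step angle comparison. Second, and more seriously, one must ensure the drift does not degrade under iteration: the geodesics, the apartments $\mathcal{A}_n$ and the boundary direction $\xi$ must reproduce the same configuration at $g^n\alpha^\star$ and $g^n\beta^\star$, so that the identical local-lemma estimate applies at every stage. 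This coherence is exactly where the building structure — parallelism of sector panels and the existence of common apartments — is indispensable, rather than bare $\CAT(0)$ geometry. Granting the uniform estimate, $b_\xi(g^n x)\to-\infty$, the orbit of $g$ is unbounded, and $g$ is the desired hyperbolic element.
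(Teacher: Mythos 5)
Your overall architecture does match the paper's: realize $d(B_0,B_1)$ by Corollary~\ref{distance}, apply the local lemmas at the projection points and rule out alternative~(2) via the right-angle property of projections to convex sets (the paper cites \cite[Proposition II.2.4(3)]{BH}, which is your first-variation argument), form $g$ as a product of two elliptic elements, and detect hyperbolicity through an unbounded Busemann function plus Parreau's semisimplicity theorem. But there is a genuine gap, and you flag it yourself: everything after ``granting the uniform estimate'' is missing, and that estimate is the actual content of the paper's proof. The paper's Section~\ref{sec:vertices} defines $a_i := g a_{i-2}$, $g_i := g g_{i-2} g^{-1}$, and proves by induction (Lemma~\ref{lem:induct}) that the configuration genuinely reproduces: the panel $p_i$ contained in the ray $[a_i,\xi)$ equals $g p_{i-2}$ and is again valid input for the local lemma (at minimum distance from $x_i$ in type $C_2$; a panel of the chamber $c_i = g c_{i-2}$ containing $x_i$ in type $A_2$). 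In type $C_2$ this requires that walls of a $\tilde{C}_2$ building carry well-defined types preserved under parallelism; in type $A_2$ it requires a two-case analysis (the chambers $c_{i-1}$ and $g_{i-1}c_{i-1}$ opposite or not), using apartments containing opposite sectors or germs of sectors and a retraction, via \cite[Propositions~1.12, 1.15, 1.17]{Parreau}. None of this is routine bookkeeping; without it there is no proof.

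Moreover, your choice of $\xi$ points the wrong way, and this is not a cosmetic sign convention: it would make the needed induction fail. You place $\xi$ in the forward direction $p^{g_0}$ and aim for $b_\xi(g^n x)\to-\infty$, which requires the orbit to keep moving \emph{toward} $\xi$. But the local lemmas only produce elements throwing a point \emph{far from} a prescribed panel; they never produce elements throwing a point \emph{close to} a prescribed direction, and a spherical building contains many pairwise far-apart panels opposite a given one, so at the second step nothing forces the orbit to continue tracking your $\xi$. The paper instead takes $\xi$ \emph{behind} the motion, on the ray through $p_1$ itself, so that the local bound $\angle_{a_i}(p_i,g_i x_i)\geq 2\pi/3$ literally becomes $\angle_{a_i}(\xi,a_{i+1})\geq 2\pi/3$ and yields a uniform Busemann \emph{increase} per step. (Your worry that $2\pi/3$ is ``borderline'' in type $A_2$ is unfounded: since $a_i,a_{i+1}$ lie in a common apartment $\mathcal{A}_i$ with $\xi\in\partial\mathcal{A}_i$ --- this is exactly Lemma~\ref{lem:sequence} --- the flat computation gives drift at least $d\cos(\pi/3)=d/2$, and this is independent of $i$ because Busemann functions centred at $\xi$ differ by constants.) Two smaller repairs: excluding alternative~(2) requires finite generation of $G_0$, since each generator fixes only a germ of segment in the direction $p'$ and one needs finitely many such germs to share a nondegenerate segment lying in $B_0$ --- convexity alone does not produce your point $z$; and a pair of opposite panels does not determine an apartment --- the paper obtains $\mathcal{A}_1$ from a pair of \emph{opposite sectors} via \cite[Proposition~1.12]{Parreau}.
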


\begin{proof}[Proof of Theorem~\ref{thm:main}, assuming the result of Proposition~\ref{prop:hyp}]  The group $G$ is finitely generated, with say $G = \langle s_1, \dots, s_n \rangle$.  By hypothesis, each $\langle s_i \rangle$ has a nonempty fixed set.  Then an induction with $G_0 = \langle s_1,\dots,s_i\rangle$ and $G_1 = \langle s_{i+1} \rangle$ completes the proof of Theorem~\ref{thm:main}.
\end{proof}

The remainder of the paper is devoted to the proof of Proposition~\ref{prop:hyp}.  We start with a general result which will be used many times.

\begin{lemma}\label{lem:ray_wall}  Let $\br=[a,\xi)$ and $\mathbf{r'}=[a',\xi)$ be geodesic rays in $X$ which have the same endpoint.  If $\br$ is contained in a wall then so is $\br'$.  Moreover, if $X$ is of type $\tC_2$, then the walls containing $\br$ and $\mathbf{r'}$ are of the same type.
\end{lemma}
\begin{proof}  
Consider a chamber $C$ in the spherical building 
at infinity containing the common endpoint $\xi$ of $\br$ and $\mathbf{r'}$. Let $S$ and $S'$ be the sectors determined by $C$ and respectively $a$ and $a'$ (that is, $S$ is the union of all geodesic rays based at $a$ with endpoint lying in $C$, and similarly for $S'$ and $a'$).  Then $S$ and $S'$ are asymptotic, so by~\cite[Corollary 1.6]{Parreau}, the sectors $S$ and $S'$ have a common subsector $S''$.  Moreover, $S''$ contains a geodesic ray $\mathbf{r''}$ which is parallel to both $\br$ and $\mathbf{r'}$, since $S''$ is contained in both $S$ and $S'$ and has~$\xi$ in its boundary. Since being parallel in an apartment preserves the property of being contained in a wall the first statement of the lemma is proved.  The second statement holds  in type $\tC_2$ because $\br$ and $\mathbf{r'}$ are contained in parallel walls.
\end{proof}

We now define an angle $\alpha$ to equal $\frac{7\pi}{8}$ or $\frac{2\pi}{3}$ as $X$ is of type $\tC_2$ or $\tA_2$, respectively (that is, $\alpha$ is the angle appearing in the corresponding local lemma).    In the remainder of this section, we will construct an element $g \in G$ and a point $\xi$ in $\partial X$, the visual boundary of~$X$, together with a sequence of points $\{ a_i \}_{i=0}^\infty$ of $X$ such that:

\begin{lemma}\label{lem:sequence}
For all $i \geq 1$, we have $\angle_{a_i}(\xi, a_{i+1}) \geq \alpha$.
\end{lemma}

We prove Lemma~\ref{lem:sequence} by induction, with $i=0,1,2$ handled separately first.  The case $i = 1$ includes the construction of $\xi$, and $g$ will be the product of elements of $G$ which appear for $i = 1,2$.  A schematic for our constructions in type $\tC_2$ is given in Figure~\ref{fig:schematic-C2}, where we sketch the geometric situation in a thin building (that is, the Euclidean plane).  The delicate part of the proof is to show, using apartments and retractions, that key portions of this sketch ``lift" to the affine building~$X$.  We give further figures in both types below.

\medskip
\begin{figure}[ht]
\begin{overpic}[width=0.9\textwidth]{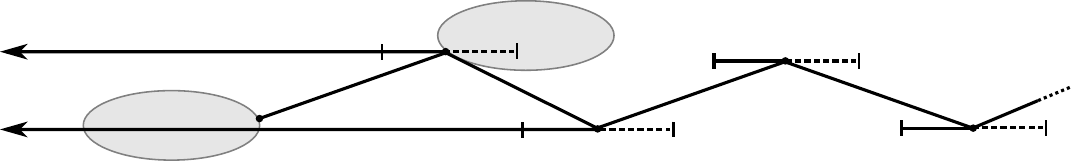} 
\put(-2,2){$\xi$}
\put(-2,9.5){$\xi$}
\put(14,4){$B_0$}
\put(52,11){$B_1$}
\put(21.5,3.5){\footnotesize{$a_0$}}
\put(41.5,11){\footnotesize{$a_1$}}
\put(35,11.5){\footnotesize{$p_1$}}
\put(46.5,11.5){\footnotesize{$g_1p_1$}}
\put(55,1.5){\footnotesize{$a_2$}}
\put(47.5,1){\footnotesize{$p_2$}}
\put(61,1){\footnotesize{$g_2p_2$}}
\put(72.5,10){\footnotesize{$a_3$}}
\put(66,10.5){\footnotesize{$p_3$}}
\put(78.5,10.5){\footnotesize{$g_3p_3$}}
\put(90,1.5){\footnotesize{$a_4$}}
\put(83.5,1){\footnotesize{$p_4$}}
\put(96,1){\footnotesize{$g_4p_4$}}
\put(40.5,6){\footnotesize{$g_1$}}
\put(55,6){\footnotesize{$g_2$}}
\put(72,6){\footnotesize{$g_3$}}
\put(90,6){\footnotesize{$g_4$}}
\end{overpic}
\caption{A schematic for our constructions in type $\tC_2$.  Here, we will define $g = g_2 g_1$, and $a_i = g a_{i-2}$ and $g_i = g g_{i-2} g^{-1}$ for all $i \geq 3$. }
\label{fig:schematic-C2}
\end{figure}

Now, as in the statement of Proposition~\ref{prop:hyp}, assume that $G$ has two proper finitely generated subgroups $G_0$ and $G_1$ such that the respective fixed point sets $B_0:=\Fix(G_0)$ and $B_1:=\Fix(G_1)$ are nonempty and disjoint.  Since $X$ is a complete $\CAT(0)$ space, for $i = 0,1$ we have that $B_i$ is a convex subset of~$X$ \cite[Corollary~II.2.8(1)]{BH}.  Note also that $B_i$ is closed, hence complete in the induced metric, since $G$ acts isometrically.
Hence Corollary \ref{distance} guarantees the existence of points $a_0 \in B_0$ and $a_1 \in B_1$ such that $d(a_0,a_1)=d(B_0,B_1)>0$.  Note that then $a_0$ (respectively, $a_1$) is the closest-point projection of $a_1$ (respectively, $a_0$) to $B_0$ (respectively, $B_1$).

From now on, for $i \geq 1$ we write $\Delta_i$ for the spherical building which is the link of $a_i$ in~$X$, and $x_i$ for the point of $\Delta_i$ corresponding to the geodesic segment $[a_{i-1},a_i]$ (it will be seen from the construction that $a_{i-1}$ and $a_i$ are always distinct).  By our assumption that every point of $X$ is a special vertex, each $\Delta_i$ will have the same type.  Note that the Alexandrov angle at $a_i$ between any two points $x$ and $y$ of $\Delta_i$, denoted $\angle_{a_i}(x,y)$, is equal to the distance between $x$ and~$y$ in the $\CAT(1)$ metric on~$\Delta_i$. 

If $\Delta_1$ is of type $C_2$, let $p_1$ be a panel of $\Delta_1$ at minimum distance from $x_1$.  If~$\Delta_1$ is of type $A_2$, let $c_1$ be a chamber of $\Delta_1$ containing $x_1$ and let $p_1$ be a panel of $c_1$.  The next, crucial, result uses the local lemmas from Section~\ref{sec:local}.

\begin{lemma}\label{lem:a1}  
\begin{compactenum}
\item If $\Delta_1$ is of type $C_2$, then there is a $g_1 \in G_1$ mapping $p_1$ to a panel of~$\Delta_1$ which is opposite $p_1$.  
\item If $\Delta_1$ is of type $A_2$, then there is a $g_1 \in G_1$ mapping $c_1$ to a chamber of $\Delta_1$ which contains a panel $p_1^\op$ opposite $p_1$.  
\end{compactenum}
Moreover, in both cases, $\angle_{a_1}(p_1,g_1x_1) \geq \alpha$.
\end{lemma}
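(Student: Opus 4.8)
The plan is to apply the local lemmas of Section~\ref{sec:local} to the action of $G_1$ on $\Delta_1$, and to rule out the ``bad'' alternative (case~(2)) using the fact, noted just above, that $a_1$ is the closest-point projection of $a_0$ onto $B_1$. Since $a_1 \in B_1 = \Fix(G_1)$, the group $G_1$ fixes $a_1$ and therefore acts on its link $\Delta_1$; this action is type-preserving because the $G$-action on $X$ is. When $\Delta_1$ has type $C_2$ I would invoke Lemma~\ref{lem:local-A1A1C2} with $x = x_1$ and $p = p_1$, and when $\Delta_1$ has type $A_2$ I would invoke Lemma~\ref{lem:local-A2} with $x = x_1$, $c = c_1$ and $p = p_1$. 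In each case the lemma yields either the desired element $g_1 \in G_1$ (alternative~(1)), or a panel $p'$ of $\Delta_1$ fixed by $G_1$ with $d(p', x_1) < \frac{\pi}{2}$ (alternative~(2)).

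The crux is to show that alternative~(2) cannot occur. Suppose it did, and let $p'$ be such a $G_1$-fixed panel. A panel of the rank-two spherical building $\Delta_1$ is a single point of its $\CAT(1)$ realisation, that is, a direction at $a_1$. Since a small ball around the special vertex $a_1$ is isometric to a ball in the Euclidean cone over $\Delta_1$, and each $g \in G_1$ fixes $a_1$ and acts on this ball via its action on $\Delta_1$ preserving radii, the fact that $G_1$ fixes the direction $p'$ means that $G_1$ fixes pointwise the radial segment from $a_1$ in direction $p'$. Hence a nondegenerate initial segment of this geodesic lies in $\Fix(G_1) = B_1$; in particular $p'$ is the initial direction at $a_1$ of a geodesic toward some point $b \in B_1 \setminus \{a_1\}$.

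I would then use the projection property. Because $a_1$ is the closest-point projection of $a_0$ onto the closed convex set $B_1$, the standard $\CAT(0)$ inequality \cite[Proposition~II.2.4]{BH} gives $\angle_{a_1}(a_0, b) \geq \frac{\pi}{2}$ for every $b \in B_1$. Translating into the link, where $x_1$ is the direction from $a_1$ toward $a_0$ and the Alexandrov angle equals the $\CAT(1)$ distance in $\Delta_1$, this reads $d(x_1, p') \geq \frac{\pi}{2}$, directly contradicting alternative~(2). Therefore alternative~(1) holds, producing $g_1 \in G_1$ as claimed. Finally, the ``moreover'' bound is exactly the quantitative conclusion of the local lemmas: in case~(1) one has $d(p_1, g_1 x_1) \geq \frac{7\pi}{8}$ in type $C_2$ and $d(p_1, g_1 x_1) \geq \frac{2\pi}{3}$ in type $A_2$, and since $\angle_{a_1}(p_1, g_1 x_1) = d(p_1, g_1 x_1)$ and $\frac{7\pi}{8} \geq \frac{2\pi}{3}$, in all cases $\angle_{a_1}(p_1, g_1 x_1) \geq \frac{2\pi}{3}$.

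The main obstacle I anticipate is the careful justification that a $G_1$-fixed panel forces a fixed geodesic segment lying in $B_1$, that is, passing rigorously from ``$G_1$ fixes a direction at the special vertex $a_1$'' to ``$G_1$ fixes a nondegenerate segment pointwise.'' This relies on the cone structure of the link together with the fact that in a uniquely geodesic space a geodesic germ is determined by its initial point and initial direction. Everything else is either a direct appeal to the local lemmas or the routine projection inequality.
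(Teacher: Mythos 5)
Your overall architecture matches the paper's proof exactly: let $G_1$ act on $\Delta_1$, apply Lemma~\ref{lem:local-A1A1C2} or Lemma~\ref{lem:local-A2}, and rule out alternative~(2) by upgrading a $G_1$-fixed panel $p'$ to a point $x' \in B_1 \setminus \{a_1\}$ with $\angle_{a_1}(a_0,x') < \pi/2$, contradicting the projection inequality \cite[Proposition II.2.4(3)]{BH}; the ``moreover'' clause is read off from the local lemmas in both treatments. However, there is a genuine gap in your bridging step from ``$G_1$ fixes the direction $p'$'' to ``$G_1$ fixes a nondegenerate segment pointwise.'' You justify it by asserting that a small ball around $a_1$ is isometric to a ball in the Euclidean cone over $\Delta_1$, with $G_1$ acting conically. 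This is true for discrete buildings, but it is false for nondiscrete $\R$-buildings, where branching along walls can be dense and no neighbourhood of any point is a metric cone over the link; these are precisely the buildings the theorem is meant to cover, and after the reduction of Lemma~\ref{special} the standing assumption in Section~\ref{sec:vertices} is that \emph{every} point of $X$ is a special vertex, so the structure in play is genuinely nondiscrete. Your fallback principle --- that in a uniquely geodesic space a geodesic germ is determined by its initial point and initial direction --- is also false in general: in a $\CAT(0)$ space two distinct geodesic segments issuing from a point can make Alexandrov angle $0$ (e.g.\ the two boundary arcs of the plane with the open horn $\{x>0,\ |y|<x^2\}$ removed, taken with the path metric, both leave the origin in the direction $(1,0)$). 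What actually rescues the statement in a building is building theory, not general metric geometry: the residue at $a_1$ is itself a building, so any two segment germs at $a_1$ lie in a common apartment, and angle $0$ inside a Euclidean plane forces a shared initial subsegment.

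The paper's proof navigates this correctly and, in doing so, uses a hypothesis your argument never invokes: finite generation of $G_1$ (available since $G_0$, $G_1$ are assumed finitely generated in Proposition~\ref{prop:hyp}). Each generator of $G_1$, fixing $a_1$ and the direction $p'$, fixes pointwise some nondegenerate segment $[a_1,\alpha_i]$ --- but the length of this segment depends on the generator. One then intersects the finitely many segments to obtain a single nondegenerate segment $[a_1,\alpha_j] \subset B_1$, yielding the point $x' = \alpha_j$ needed for the contradiction. Your cone picture, were it valid, would give a uniform radius for free, which is exactly why you did not notice that finite generation is needed; without the cone structure, an infinitely generated group of elliptic isometries fixing $a_1$ and $p'$ could a priori have fixed germs shrinking to $\{a_1\}$, leaving no point of $B_1\setminus\{a_1\}$ in the direction $p'$ at all. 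To repair your proof, replace the cone argument by: (i) the residue-building fact that angle-$0$ segment germs at $a_1$ coincide initially, applied to $[a_1,\alpha]$ and $g[a_1,\alpha]$ for each generator $g$ of $G_1$ and $\alpha$ in the direction $p'$, and (ii) the finite-generation intersection step.
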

\begin{proof}  Since $G_1$ fixes $a_1$, it acts on $\Delta_1$.   It suffices to show that case~(2) in Lemma~\ref{lem:local-C2} or Lemma~\ref{lem:local-A2} 
does not occur (with the obvious modifications of notation).  Assume by contradiction that there is a panel $p'$ of $\Delta_1$ which is fixed by $G_1$ and 
is such that $d(p',x_1)<\pi/2$ in the $\CAT(1)$ metric on $\Delta_1$. 

Since $p'$ is fixed, every generator of $G_1$ will fix a line segment $[a_1,x_i]$ with $x_i \neq a_1$, and since $G_1$ is finitely generated there exists a $j$ such that $[a_1,x_j]\subset B_1$. Hence there exists an $x':=x_j \neq a_1 \in B_1$ and we have $\angle_{a_1}(a_0,x')<\pi/2$. This contradicts ~\cite[Proposition II.2.4(3)]{BH}, completing the proof.
\end{proof}

Let $g_1 \in G_1$ be as given by Lemma~\ref{lem:a1}, and define $a_2 := g_1 a_0$. 

It will be helpful from here on to abuse terminology, as follows.  If $p$ is a panel of $\Delta_i$ and $M$ is the wall of an apartment of $X$ containing~$a_i$ which is determined by $p$, then we will say that $M$ contains $p$.  We similarly abuse terminology for geodesic rays based at~$a_i$ which are contained in walls determined by $p$.

We now construct $\xi$, and prove Lemma~\ref{lem:sequence} for $i = 1$.  
Let $S_{12}$ be a sector of $X$ based at~$a_1$ which contains the point $a_2$, so that in type $\tilde A_2$, if $a_2$ lies on a wall through $a_1$ then $S_{12}$ is a sector corresponding to the chamber $g_1 c_1$ of $\Delta_1$.

\begin{lemma}\label{lem:A1}  There is an apartment $\App_1$ containing $S_{12}$, such that $p_1$ is contained in a wall $M_1$ of $\App_1$.
\end{lemma}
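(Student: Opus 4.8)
The plan is to reduce, via the abuse of terminology introduced just before the statement, to exhibiting a single wall through $a_1$ whose two ends in $\Delta_1$ are prescribed panels. The sector $S_{12}$, based at $a_1$ with link-chamber $C$, is the Euclidean cone over the closed arc $\bar C$, so its two bounding rays point in the directions of the two panels of $C$. It therefore suffices to find a panel $q$ of $C$ with $q$ opposite $p_1$, and then an apartment $\mathcal{A}_1\supseteq S_{12}$ in which the wall $M_1$ extending the bounding ray $\rho_q$ of $S_{12}$ has opposite end $p_1$; then $M_1$ bounds $S_{12}$ and contains $p_1$. To locate $q$: in type $A_2$, Lemma~\ref{lem:a1}(2) gives a panel $p_1^{\op}$ of $C=g_1c_1$ opposite $p_1$, and I set $q=p_1^{\op}$; in type $C_2$, the direction of $a_2=g_1a_0$ at $a_1$ is $g_1x_1\in\bar C$, and since $p_1$ is nearest to $x_1$ and $g_1$ is an isometry of $\Delta_1$, the panel $g_1p_1$ is nearest to $g_1x_1$, hence a panel of $C$, which by Lemma~\ref{lem:a1}(1) is opposite $p_1$; here I set $q=g_1p_1$. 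In both cases $d_{\Delta_1}(q,p_1)=\pi$, so $\rho_q$ and a ray $\rho_{p_1}$ based at $a_1$ in direction $p_1$ meet at angle $\pi$ and concatenate to a geodesic line $M_1$, the candidate wall.

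Next I would realise this configuration inside one apartment, working first in the spherical building $\Delta_1$. Since $p_1$ is opposite the panel $q$ of $C$, standard incidence geometry (a point off a line in the projective plane, respectively its analogue in the generalised quadrangle) yields a chamber $D\supseteq p_1$ opposite $C$. Let $\Sigma_0$ be the apartment of $\Delta_1$ containing the opposite chambers $C$ and $D$. In $\Sigma_0$ the panels of $D$ are exactly the antipodes of the panels of $C$; as $p_1\subseteq D$ is opposite $q$, it must be the antipode of $q$ in $\Sigma_0$. Thus $\Sigma_0\supseteq C$ and realises $M_1$ as the wall with ends $q$ and $p_1$.

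It then remains to lift $\Sigma_0$ to $X$, that is, to find an apartment $\mathcal{A}_1$ of $X$ which contains the whole sector $S_{12}$ and induces $\Sigma_0$ on $\link_{a_1}(X)=\Delta_1$; taking $M_1$ to be the wall of $\mathcal{A}_1$ with ends $q$ and $p_1$ then finishes the proof, since $\rho_q$ is a bounding ray of $S_{12}$ and $q$ is a panel of $C$. I would obtain $\mathcal{A}_1$ from the apartment axioms for $\R$-buildings in Parreau's work, which guarantee that the map sending an apartment containing $S_{12}$ to its link at $a_1$ surjects onto the apartments of $\Delta_1$ containing $C$.

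I expect this lifting step to be the main obstacle. It does not suffice merely to extend the germ of $S_{12}$ at $a_1$ to an apartment inducing $\Sigma_0$: already in a tree an apartment can agree with a ray only along its initial germ and then branch away, so one must ensure the lift follows $S_{12}$ all the way out to infinity, which is exactly the content of the surjectivity above. A smaller but genuine point is the type $C_2$ claim that $g_1p_1$ really is a panel of the link-chamber $C$ of $S_{12}$; the only subtlety is the degenerate case where $a_2$ lies on a wall through $a_1$, in which case $x_1$ is itself a panel, so $p_1=x_1$ and $g_1p_1=g_1x_1$ is a panel of any chamber $C$ with $g_1x_1\in\bar C$.
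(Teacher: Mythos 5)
Your reduction is exactly the paper's: both arguments use Lemma~\ref{lem:a1} to produce a panel $q$ of the link-chamber $c_{12}$ of $S_{12}$ that is opposite $p_1$ (in type $A_2$ this is $p_1^{\op}$, in type $C_2$ it is $g_1p_1$), and then must manufacture an apartment of $X$ containing all of $S_{12}$ together with a ray from $a_1$ in the direction $p_1$. Your treatment of the two types, including the degenerate $C_2$ case where $x_1$ is itself a panel, and your passage to a chamber $D$ of $\Delta_1$ containing $p_1$ and opposite $C$, with its spherical apartment $\Sigma_0$, are all sound.

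The problem is the step you yourself flag as the main obstacle: the lifting. You assert that ``the apartment axioms for $\R$-buildings in Parreau's work'' guarantee that the apartments of $X$ containing $S_{12}$ surject onto the apartments of $\Delta_1$ containing $C$. No such statement is among the axioms, nor is it an immediate formal consequence of them; it is a genuine theorem about $\R$-buildings, and it is exactly the content your write-up is missing --- your own tree example shows the claim cannot be dismissed as automatic. The paper closes precisely this step by citing \cite[Proposition~1.12]{Parreau}: two opposite sectors of $X$ based at the same point are contained in a unique apartment. In your notation, the chamber $D$ determines a sector $S_{12}^{\op}$ based at $a_1$ whose germ is opposite that of $S_{12}$; Proposition~1.12 then yields the unique apartment $\mathcal{A}_1$ containing $S_{12}\cup S_{12}^{\op}$, whose link at $a_1$ contains $C$ and $D$ and hence equals $\Sigma_0$, so that $\mathcal{A}_1$ contains the wall $M_1$ with ends $q$ and $p_1$ bounding $S_{12}$. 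Thus your outline is correct and is completed by replacing the vague appeal to ``the axioms'' with this single citation; as written, however, the decisive assertion is unproven, so the proposal does not yet constitute a proof.
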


\begin{proof}  By Lemma~\ref{lem:a1} we have that $\angle_{a_1}(p_1,g_1x_1) \geq \alpha$, hence the sectors $S_{12}$ and $g_1^{-1}(S_{12})$ are distinct.  So by~\cite[Proposition 1.15]{Parreau}, there is an apartment $\App_1$ containing both $S_{12}$ and a germ of $g_1^{-1}(S_{12})$.  Now define $M_1$ to be the wall of $\App_1$ through $a_1$ which contains $p_1$.
\end{proof}  
  
We define $\xi \in \partial X$ to be the endpoint of $M_1$ such that the ray $[a_1, \xi)$ contains $p_1$.  By construction $\angle_{a_1}(\xi,a_2) = \angle_{a_1}(p_1,g_1x_1)$.  
Hence by Lemma~\ref{lem:a1} we have $\angle_{a_1}(\xi,a_2)  \geq \alpha$.  This proves Lemma~\ref{lem:sequence} for $i = 1$.

For $i \geq 1$, we now define $\br_i$ to be the ray $[a_i,\xi)$.  By construction, $\br_1 = [a_1,\xi)$ is contained in a wall, and so by Lemma~\ref{lem:ray_wall} and induction, for all $i \geq 1$ the ray $\br_i$ will be contained in a wall.  We may thus, for $i \geq 1$, define $p_i$ to be the panel of $\Delta_i$ induced by $\br_i$.  By Lemma~\ref{lem:ray_wall} again, if $X$ is of type $\tC_2$, the panels $p_i$ are all of the same type.

We next prove Lemma~\ref{lem:sequence} for $i = 2$.  Define $M_2'$ to be the wall of $\App_{1}$ containing $\br_2$.

\begin{lemma}\label{lem:p2} 
\begin{compactenum}
\item If $\Delta_2$ is of type $C_2$, then $p_2$ is a panel of $\Delta_2$ which is at minimum distance from $x_2$.
\item If $\Delta_2$ is of type $A_2$, then there is a chamber $c_2$ of $\Delta_2$ which contains $x_2$ such that $p_2$ is a panel of $c_2$.  
\end{compactenum}
\end{lemma}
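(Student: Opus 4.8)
The plan is to do the entire computation inside the Euclidean apartment $\mathcal{A}_1$, exploiting that $\mathbf{r}_1=[a_1,\xi)$ and $\mathbf{r}_2=[a_2,\xi)$ are parallel rays. Since $\xi$ is an endpoint of the wall $M_1$ it is a singular boundary point, so $\mathbf{r}_2$ lies in the wall $M_2'$ and the direction of $\mathbf{r}_2$ at $a_2$ is exactly the panel (vertex) $p_2$ of $\Delta_2$; the direction of $[a_2,a_1]$ at $a_2$ is by definition $x_2$. Because the Alexandrov angle at $a_2$ equals the $\CAT(1)$ distance in $\Delta_2$, this gives $d(x_2,p_2)=\angle_{a_2}(\xi,a_1)$, and everything reduces to bounding this one angle.

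First I would bound $\angle_{a_2}(\xi,a_1)$. In the Euclidean plane $\mathcal{A}_1$ the rays $\mathbf{r}_1,\mathbf{r}_2$ are parallel and $[a_1,a_2]$ is a transversal, so the co-interior angles satisfy $\angle_{a_2}(\xi,a_1)=\pi-\angle_{a_1}(\xi,a_2)$. The direction of $\mathbf{r}_1$ at $a_1$ is $p_1$, and since $a_2=g_1a_0$ with $g_1$ fixing $a_1$, the direction of $[a_1,a_2]$ at $a_1$ is $g_1x_1$; hence $\angle_{a_1}(\xi,a_2)=\angle_{a_1}(p_1,g_1x_1)=d(p_1,g_1x_1)$. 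In type $C_2$ the $g_1$ of Lemma~\ref{lem:a1} comes from case~(1) of Lemma~\ref{lem:local-A1A1C2}, whose \emph{moreover} clause gives $d(p_1,g_1x_1)\ge\frac{7\pi}{8}$, so $d(x_2,p_2)\le\frac{\pi}{8}$; in type $A_2$ Lemma~\ref{lem:a1} gives $\angle_{a_1}(\xi,a_2)\ge\frac{2\pi}{3}$, so $d(x_2,p_2)\le\frac{\pi}{3}$.

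It then remains to read off the two conclusions. In the $\CAT(1)$ realisation, each apartment of $\Delta_2$ is a circle of length $2\pi$ subdivided into chambers of angular length $\frac{\pi}{4}$ in type $C_2$ and $\frac{\pi}{3}$ in type $A_2$, so distinct panels are at distance at least $\frac{\pi}{4}$, respectively $\frac{\pi}{3}$. For~(1), since $d(x_2,p_2)\le\frac{\pi}{8}=\frac12\cdot\frac{\pi}{4}$, the vertex $p_2$ must be an endpoint of the chamber containing $x_2$ and no panel can be nearer, so $p_2$ is a panel at minimum distance from $x_2$. For~(2), since $d(x_2,p_2)\le\frac{\pi}{3}$, the point $x_2$ lies in the closed star of the vertex $p_2$, that is, in a chamber $c_2$ having $p_2$ as a panel. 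The degenerate cases in which $x_2$ is itself a vertex are immediate.

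The main obstacle is the sharp constant in type $C_2$: the uniform bound $\frac{2\pi}{3}$ recorded in Lemma~\ref{lem:a1} would only yield $d(x_2,p_2)\le\frac{\pi}{3}$, which exceeds $\frac{\pi}{8}$ and is far too weak to force $p_2$ to be a nearest panel. One therefore has to return to the \emph{moreover} clause of the $C_2$ local lemma and use $d(p,gx)\ge\frac{7\pi}{8}$, checking that this is precisely the estimate produced by the element $g_1$ selected in Lemma~\ref{lem:a1}. A secondary point to verify carefully is that the co-interior-angle identity is applied to the correct pair of angles, so that the large angle at $a_1$ is converted into a small angle at $a_2$; this is exactly what places $p_2$, the direction of $\mathbf{r}_2$ towards $\xi$, close to $x_2$.
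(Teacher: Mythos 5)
Your proof is correct and takes essentially the same route as the paper: both arguments work entirely inside the Euclidean apartment $\mathcal{A}_1$, using the parallel rays $\mathbf{r}_1$, $\mathbf{r}_2$ and the transversal $[a_1,a_2]$ to convert the large angle at $a_1$ into a small angle at $a_2$, from which minimality of $d(x_2,p_2)$ (type $C_2$) or membership of $x_2$ in the star of $p_2$ (type $A_2$) is immediate. The only cosmetic difference is in type $C_2$: the paper transfers the exact angle via the isometry $g_1$ and alternate angles, giving $\angle_{a_2}(x_2,p_2)=\angle_{a_1}(x_1,p_1)\leq\frac{\pi}{8}$, whereas you use co-interior angles together with the sharp $\frac{7\pi}{8}$ estimate from the \emph{moreover} clause of Lemma~\ref{lem:local-A1A1C2}, which, as you correctly observe, applies to the element $g_1$ of Lemma~\ref{lem:a1} because $p_1$ was chosen at minimum distance from $x_1$.
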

\begin{proof}  
If $\Delta_2$ is of type $C_2$ (see Figure~\ref{fig:Lemma312typeC2}), then by construction the wall $M_{1}$ through $a_{1}$ contains both $p_{1}$ and $g_{1} p_{1}$.  Now $\angle_{a_{1}}(x_{1},p_{1}) = \angle_{a_{1}}(g_{1} x_{1}, g_{1} p_{1})$, both $g_{1}x_{1}$ and $x_2$ lie on the geodesic segment $[a_{1},a_2]$, and $M_1$ and $M_2'$ are parallel.  It follows that $\angle_{a_2}(x_2, p_2) = \angle_{a_1}(x_1,p_1)$, and thus $p_2$ is a panel of $\Delta_2$ at minimum distance from $x_2$. 

\bigskip
\begin{figure}[ht]
\begin{overpic}[width=0.6\textwidth]{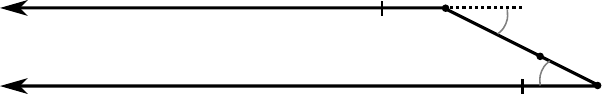} 
\put(-3,0){$\xi$}
\put(-3,13){$\xi$}
\put(10,-3){$M_2'$}
\put(10,16){$M_1$}
\put(30,-2){$\br_2$}
\put(30,16){$\br_{1}$}
\put(85,-3){$p_2$}
\put(62,17){$p_1$}
\put(98,-2){$a_2$}
\put(72,16){$a_1$}
\put(90,7){$x_2$}
\put(80,16){$g_1p_1$}
\end{overpic}
\caption{The proof of Lemma~\ref{lem:p2} in type $C_2$.}
\label{fig:Lemma312typeC2}
\end{figure}

\begin{figure}[ht]
\begin{overpic}[width=0.6\textwidth]{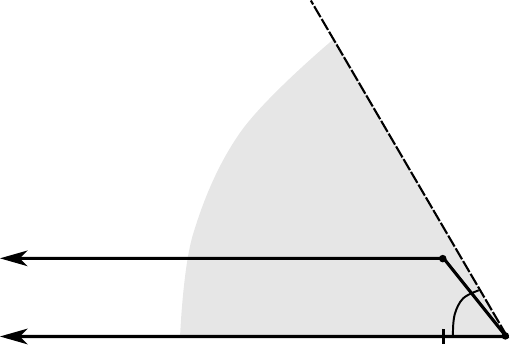} 
\put(-3,0){$\xi$}
\put(-3,16){$\xi$}
\put(10,-3){$M_2'$}
\put(10,19){$M_1$}
\put(30,-2){$\br_2$}
\put(30,19){$\br_{1}$}
\put(85,-3){$p_2$}
\put(98,-2){$a_2$}
\put(84,19){$a_1$}
\put(91,5){$c_2$}
\put(55,25){$S_2$}
\end{overpic}
\caption{The proof of Lemma~\ref{lem:p2} in type $A_2$.}
\label{fig:Lemma312typeA2}
\end{figure}

If $\Delta_2$ is of type $A_2$ (see Figure~\ref{fig:Lemma312typeA2}), there is a sector $S_2$ of $\App_1$ which is based at $a_2$ and is bounded by $\br_2$, such that $a_1$ is in $S_2$.  Then we take $c_2$ to be the chamber of $\Delta_2$ determined by $S_2$. 
\end{proof}

Now define $G_2 := g_1 G_0 g_1^{-1}$ and $B_2 := g_1 B_0$, so that $B_2$ is the fixed set of $G_2$.  Then since $g_1$ is an isometry which fixes $B_1$, we have $d(B_0,B_1) = d(B_1,B_2) = d(a_1,a_2)$, and that~$a_2$ is the closest-point projection of $a_1$ to~$B_2$.  
The next result is proved using the local lemmas of Section~\ref{sec:local} and similar arguments to those in the proof of Lemma~\ref{lem:a1}.

\begin{lemma}\label{lem:a2}  
\begin{compactenum}
\item If $\Delta_2$ is of type $C_2$, there is a $g_2 \in G_2$ mapping $p_2$ to a panel of $\Delta_2$ which is opposite $p_2$.  
\item If $\Delta_2$ is of type $A_2$, let $c_2$ be as given by Lemma~\ref{lem:p2}(2).  Then there is a $g_2 \in G_2$ mapping $c_2$ to a chamber of $\Delta_2$ which contains a panel $p_2^\op$ opposite $p_2$.  
\end{compactenum}
Moreover, in all cases, $\angle_{a_2}(p_2,g_2x_2) \geq \alpha$.
\end{lemma}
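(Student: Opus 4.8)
The plan is to apply the local lemmas of Section~\ref{sec:local} to the action of $G_2$ on $\Delta_2$, exactly as was done for $G_1$ on $\Delta_1$ in Lemma~\ref{lem:a1}, and to rule out the alternative (case (2) of those lemmas) by a convexity/angle argument. First I would observe that since $G_2$ fixes $a_2$ (as $a_2 = g_1 a_0 \in g_1 B_0 = B_2 = \Fix(G_2)$), the group $G_2$ acts by type-preserving automorphisms on the link $\Delta_2$. I would then invoke Lemma~\ref{lem:local-A1A1C2} in type $C_2$ (with $x = x_2$ and $p = p_2$, which is at minimum distance from $x_2$ by Lemma~\ref{lem:p2}(1)) or Lemma~\ref{lem:local-A2} in type $A_2$ (with $x = x_2$, $c = c_2$ the chamber from Lemma~\ref{lem:p2}(2), and $p = p_2$ a panel of $c_2$). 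Each lemma produces either the desired element $g_2 \in G_2$ of case (1), together with the angle bound $\angle_{a_2}(p_2, g_2 x_2) = d(p_2, g_2 x_2) \geq 2\pi/3$ (the $A_2$ opposition bound; note $d \geq 7\pi/8 \geq 2\pi/3$ also covers type $C_2$), or a panel $p'$ of $\Delta_2$ fixed by $G_2$ with $d(p', x_2) < \pi/2$.

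The core of the argument is to show that case (2) cannot occur, and here I would mirror the contradiction in Lemma~\ref{lem:a1}. Suppose $G_2$ fixes a panel $p'$ of $\Delta_2$ with $d(p', x_2) < \pi/2$. Since each generator of $G_2$ then fixes a nondegenerate segment $[a_2, \alpha_i]$ with $\alpha_i \neq a_2$, and $G_2$ is finitely generated, some common subsegment $[a_2, x'] \subset B_2$ with $x' \neq a_2$ is fixed pointwise by all of $G_2$; the direction of $x'$ at $a_2$ is that of the fixed panel $p'$. This gives a point $x' \in B_2$, distinct from $a_2$, with $\angle_{a_2}(x_2, x') = d(p', x_2) < \pi/2$. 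But recall $x_2$ is the direction at $a_2$ of the geodesic $[a_1, a_2]$, and $a_2$ is the closest-point projection of $a_1$ onto the convex set $B_2$ (established just before the lemma, using that $g_1$ is an isometry fixing $B_1$ and that $a_1, a_2$ realise $d(B_1, B_2)$). Closest-point projection onto a complete convex set in a $\CAT(0)$ space forces $\angle_{a_2}(a_1, x') \geq \pi/2$ for every $x' \in B_2$ by~\cite[Proposition II.2.4(3)]{BH}, contradicting $\angle_{a_2}(x_2, x') < \pi/2$.

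The step I expect to be the main obstacle is the bookkeeping that underlies the projection claim, namely verifying that $a_2$ is genuinely the closest-point projection of $a_1$ onto $B_2$. This rests on transporting the equality $d(a_0, a_1) = d(B_0, B_1)$ from the pair $(B_0, B_1)$ to the pair $(B_2, B_1)$ via the isometry $g_1$: since $g_1 \in G_1$ fixes $B_1 = \Fix(G_1)$ pointwise and $g_1 a_0 = a_2$, $g_1 B_0 = B_2$, applying $g_1$ gives $d(a_2, a_1) = d(g_1 a_0, g_1 a_1) = d(a_0, a_1) = d(B_0, B_1) = d(B_2, B_1)$, and the projection characterisation of the realising point in a $\CAT(0)$ space then identifies $a_2$ as the projection of $a_1$ onto $B_2$. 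Once this is in hand, the angle contradiction is immediate and the proof concludes exactly as in Lemma~\ref{lem:a1}.
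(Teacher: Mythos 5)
Your proposal is correct and is essentially the paper's own argument: the paper establishes the bookkeeping (that $B_2 = g_1 B_0 = \Fix(G_2)$, that $d(B_1,B_2) = d(a_1,a_2)$, and that $a_2$ is the closest-point projection of $a_1$ onto $B_2$) in the paragraph preceding the lemma, and then states that the lemma ``is then proved using the local lemmas of Section~\ref{sec:local}, similarly to Lemma~\ref{lem:a1}'' --- which is exactly your expansion, including ruling out case (2) of the local lemmas via the fixed segment $[a_2,x'] \subset B_2$ and the angle bound $\angle_{a_2}(a_1,x') \geq \pi/2$ from \cite[Proposition II.2.4(3)]{BH}. Your filled-in details (the inputs from Lemma~\ref{lem:p2}, the transport of the distance-realising pair by the isometry $g_1$ fixing $B_1$ pointwise, and the observation that $7\pi/8 \geq 2\pi/3$ in type $C_2$) all match what the paper intends.
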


We now define
\[ g: = g_2 g_1 \in G. \]
Observe that since $g_1$ fixes $a_1$ and $g_2$ fixes $a_2 = g_1 a_0$, we have $a_2 = g a_0$ and $a_3 = g a_1$.  We may thus, for $i \geq 2$, inductively define
\[
a_i := g a_{i-2}.
\]
We also for $i \geq 3$ define \[ g_i := g g_{i-2} g^{-1}.\]  An easy induction shows that $g_{i-1}g_{i-2}=g$ for all $i \geq 3$.
Also, for all $i \geq 1$, by induction $g_i$ fixes $a_i$, hence $g_i$ acts on $\Delta_i$, and we have $g_i a_{i-1} = a_{i+1}$.  Finally, if $\Delta_i$ is of type $A_2$ then for $i \geq 3$ we define (with abuse of terminology using chambers in the spherical buildings instead of their corresponding sectors in $X$)
\[
c_i := g c_{i-2}.
\]

The next result completes the proof of Lemma~\ref{lem:sequence}. 

\begin{lemma}\label{lem:induct}  For $i \geq 1$: 
\begin{enumerate}
\item If $\Delta_i$ is of type $C_2$, then $p_i$ is equal to $g p_{i-2}$ and is a panel of $\Delta_i$ which is at minimum distance from~$x_i$.  Moreover, $g_i p_i$ is opposite $p_i$. 
\item If $\Delta_i$ is of type $A_2$, then $c_i$ contains $x_i$, $p_i$ is a panel of $c_i$ and $g_i c_i$ contains a panel $p_i^\op$ which is opposite $p_i$.  
\end{enumerate}
Moreover, in all cases, $\angle_{a_i}(\xi, a_{i+1}) \geq \alpha$. 
\end{lemma}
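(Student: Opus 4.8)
The plan is to prove Lemma~\ref{lem:induct} by a two-step induction on $i$, with the base cases $i=1$ and $i=2$ supplied by Lemmas~\ref{lem:a1}--\ref{lem:A2}. For the inductive step I would exploit the fact that, by the very definitions, the whole configuration at $a_i$ is the image under the single isometry $g$ of the configuration at $a_{i-2}$: we have $a_i=ga_{i-2}$, $x_i=gx_{i-2}$, $c_i=gc_{i-2}$ (in type $A_2$), and $g_i=gg_{i-2}g^{-1}$, and since $ga_{i-2}=a_i$ the map $g$ restricts to an isometry of links $\Delta_{i-2}\to\Delta_i$ sending $x_{i-2}\mapsto x_i$ (because $g$ carries the direction $a_{i-2}a_{i-3}$ to $a_ia_{i-1}$, using $a_{i-1}=ga_{i-3}$). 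Transporting the statement of the lemma at $i-2$ therefore yields, at $a_i$, a panel $gp_{i-2}$ (resp. the chamber $gc_{i-2}=c_i$) which is at minimal distance from $x_i$ (resp. contains $x_i$), which $g_i$ maps to an opposite panel, and which lies on the wall $gM_{i-2}$ bounding a sector together with its opposite inside the apartment $g\mathcal{A}_{i-2}$.

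The one genuine point to check is that the lemma is phrased in terms of the panel $p_i$ cut out by the \emph{fixed} ray $\mathbf{r}_i=[a_i,\xi)$ rather than the transported panel $gp_{i-2}$, and these coincide only if $g$ fixes $\xi$. To secure this I would first reuse the flat/alternate-angle computation of Lemma~\ref{lem:p2} one step back: inside the Euclidean apartment $\mathcal{A}_{i-1}$ the walls $M_{i-1}$ and $M_i'$ are parallel, $p_{i-1}$ and $g_{i-1}p_{i-1}$ are the two ends of $M_{i-1}$, and the transversal $[a_{i-1},a_i]$ gives $\angle_{a_i}(x_i,p_i)=\angle_{a_{i-1}}(x_{i-1},p_{i-1})$, which by the induction hypothesis at $i-1$ is at most $\frac{\pi}{8}$ in type $C_2$; hence $p_i$ is itself a panel at minimal distance from $x_i$, and in type $A_2$ the same apartment exhibits $p_i$ as a panel of the chamber $c_i$ through $x_i$. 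Since $gp_{i-2}$ is, by transport, \emph{also} a nearest panel to $x_i$ (resp. a panel of $c_i$), uniqueness of the nearest panel forces $p_i=gp_{i-2}$, and therefore $g\xi=\xi$; at the first step $i=3$ this establishes $g\xi=\xi$ once and for all.

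Granting $p_i=gp_{i-2}$, the remaining assertions follow cleanly. Opposition holds because $g_ip_i=g_i(gp_{i-2})=g(g_{i-2}p_{i-2})$, and $g$ is an isometry of links taking the opposite pair $(p_{i-2},g_{i-2}p_{i-2})$ to the opposite pair $(p_i,g_ip_i)$. The angle bound then drops out exactly as in the local lemmas: $\angle_{a_i}(\xi,a_{i+1})=\angle_{a_i}(p_i,g_ix_i)\ge \pi-\angle_{a_i}(p_i,x_i)\ge \frac{7\pi}{8}\ge \frac{2\pi}{3}$, using that $g_ip_i$ is opposite $p_i$ and that $g_i$ preserves the angle $\angle_{a_i}(p_i,x_i)\le \frac{\pi}{8}$. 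Finally the apartment $\mathcal{A}_i$ is produced as in Lemma~\ref{lem:A2} (equivalently, one may simply take $\mathcal{A}_i=g\mathcal{A}_{i-2}$, which now contains $\xi$ in its boundary since $g\xi=\xi$), with $a_i,a_{i+1}\in\mathcal{A}_i$ and $\mathbf{r}_i\subseteq M_i$.

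I expect the main obstacle to be precisely the verification that $g$ fixes $\xi$, i.e. the identification of the intrinsically transported panel $gp_{i-2}$ with the panel $p_i$ determined by the fixed boundary point. This is where the proof genuinely uses that we are in type $A_2$ or $C_2$ (via the $\le\frac{\pi}{8}$ and chamber structure coming from the local lemmas), and where one must handle with care the borderline case of two equidistant nearest panels in type $C_2$, as well as the selection of the correct panel of $c_i$ in type $A_2$.
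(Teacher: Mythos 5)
Your proposal follows the same skeleton as the paper's proof — a two-step induction with base cases $i=1,2$, transport of the whole configuration at $a_{i-2}$ to $a_i$ by the single isometry $g$, reduction of everything to the identification $p_i = gp_{i-2}$, and then opposition via $g_ip_i = gg_{i-2}p_{i-2}$ — and you have correctly isolated that identification as the crux. However, your resolution of the crux by ``uniqueness'' is a genuine gap, not a borderline detail to be handled later; it is precisely where the paper does its hardest work. In type $C_2$, a point of the link can have \emph{two} panels at minimum distance (one of each type, e.g.\ when $x_i$ is the midpoint of a chamber), so uniqueness of the nearest panel does not hold and does not force $p_i = gp_{i-2}$. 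The paper closes this by a type argument: in type $\tilde{C}_2$ walls have well-defined types and parallel walls in a common apartment have equal types, so $p_{i-2}$, $p_{i-1}$ and $p_i$ all have the same type, while $gp_{i-2}$ has the type of $p_{i-2}$ because the action is type-preserving; a nearest panel of a \emph{fixed} type is unique. In type $A_2$ the gap is wider: a chamber has two panels, so knowing $p_i$ is ``a panel of the chamber through $x_i$'' selects nothing; the chamber containing $x_i$ is itself non-unique when $x_i$ lies on a panel; and the wall-type trick does not transfer, since the two ends of a wall in type $\tilde{A}_2$ have \emph{different} types. This is why the paper's $A_2$ case is a substantial two-case geometric analysis — Case I ($g_{i-1}c_{i-1}$ opposite $c_{i-1}$): the unique apartment containing the two opposite sectors and a reflection taking $p_{i-1}$ to $g_{i-1}p_{i-1}$; Case II (non-opposite): the retraction $\rho_{i-1}$ centred at $a_{i-1}$ together with a rotation through $2\pi/3$ — and nothing in your proposal substitutes for it. The issues you defer as ones to ``handle with care'' are exactly the content of the proof.

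Separately, your inference ``$p_i = gp_{i-2}$, and therefore $g\xi = \xi$'' is unjustified and should be deleted: equality of these panels says only that the rays $[a_i,\xi)$ and $[a_i,g\xi)$ have the same germ at $a_i$, and in a building two rays issuing from a point with the same germ can branch apart further out. The paper never asserts, and never needs, that $g$ fixes $\xi$; consequently you cannot take $\mathcal{A}_i = g\mathcal{A}_{i-2}$ and expect $\xi \in \partial\mathcal{A}_i$. Your fallback — constructing $\mathcal{A}_i$ as in Lemma~\ref{lem:A2} — is the correct (and the paper's) route: the apartment is produced so as to contain the sector based at $a_i$ towards $a_{i+1}$ together with the actual ray $\mathbf{r}_i$ in a bounding wall, and it is this, not any fixed-point property of $\xi$ under $g$, that guarantees $\xi \in \partial \mathcal{A}_i$.
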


\begin{proof}  The proof is by induction on $i$, and the cases $i=1,2$ have been established above.  

\bigskip 

\begin{figure}[ht]
\begin{overpic}[width=0.6\textwidth]{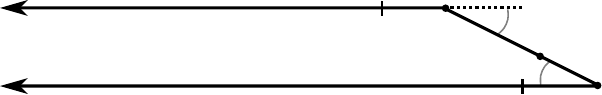} 
\put(-3,0){$\xi$}
\put(-3,13){$\xi$}
\put(10,-3){$M_i'$}
\put(10,16){$M_{i-1}$}
\put(30,-2){$\br_i$}
\put(30,16){$\br_{i-1}$}
\put(85,-3){$p_i$}
\put(62,18){$p_{i-1}$}
\put(98,-2){$a_i$}
\put(72,16){$a_{i-1}$}
\put(90,7){$x_i$}
\end{overpic}
\caption{The proof of Lemma~\ref{lem:induct} in type $C_2$.}
\label{fig:Lemma315typeC2}
\end{figure}

Suppose first that $\Delta_i$ is of type $C_2$.  For $i\geq 3$, by induction the panel $gp_{i-2}$ is at minimum distance from $gx_{i-2} = x_i$.  Since the action of $G$ is type-preserving, $g p_{i-2}$ has the same type as $p_{i-2}$.  Thus $p_i$ and $g p_{i-2}$ have the same type.  

We next show that $p_i$ is a panel at minimum distance from $x_i$ (see Figure~\ref{fig:Lemma315typeC2}).  Let $S_{i-1}$ be a sector of $X$ based at $a_{i-1}$ which contains the point $a_i$.  By induction, the panels $p_{i-1}$ and $g_{i-1}p_{i-1}$ are opposite, and so the union of $\br_{i-1}$ with the facet of $S_{i-1}$ containing $g_{i-1}p_{i-1}$ is a wall $M_{i-1}$ of $X$.  Moreover the wall $M_{i-1}$ bounds both $S_{i-1}$ and a sector $S_{i-1}^\op$ of $X$ which is opposite $S_{i-1}$.  Thus by~\cite[Proposition~1.12]{Parreau}, there exists a (unique) apartment $\App_{i-1}$ containing both $S_{i-1}^\op$, hence $\br_{i-1}$, and $S_{i-1}$.  Let $M_{i}'$ be the wall of $\App_{i-1}$ containing $\br_i$.
Now observe that as $M_{i-1}$ and $M_i'$ are parallel, we have $\angle_{a_{i-1}}(g_{i-1} p_{i-1}, g_{i-1} x_{i-1}) = \angle_{a_i}(p_i,x_i)$.   Also, by induction, $g_{i-1} p_{i-1}$ is a panel at minimum distance from $g_{i-1} x_{i-1}$.  Therefore $p_i$ is a panel at minimal distance from $x_i$.  

Now $p_i$ and $g p_{i-2}$ are panels of the same type both at minimum distance from $x_i$.  It follows that $p_i = g p_{i-2}$.  
Hence as $p_{i-2}$ and $g_{i-2} p_{i-2}$ are opposite, the panels $p_i = g p_{i-2}$ and $g g_{i-2} p_{i-2}$ are opposite.  To complete the proof of (1) in type $C_2$, we observe that
\[
g_i p_i = g g_{i-2}g^{-1} g p_{i-2} = gg_{i-2}p_{i-2}.
\]

If $\Delta_i$ is of type $A_2$, then since $c_i = g c_{i-2}$ and $x_i = g x_{i-2}$, by induction $c_i$ contains $x_i$.  We next show that $p_i = gp_{i-2}$, which implies that $p_i$ is a panel of $c_i$.  
Now $c_{i-1}$ is a chamber of $\Delta_{i-1}$ which contains $x_{i-1}$ and $p_{i-1}$ is a panel of $c_{i-1}$.  Hence as $G$ is type-preserving and opposite panels in type $A_2$ have distinct types, $g_{i-1} c_{i-1}$ has panels $p_{i-1}^\op$ and $g_{i-1} p_{i-1}$ (and contains $g_{i-1} x_{i-1}$).  Let $S_{i-1}$ be a sector of $X$ which is based at $a_{i-1}$ and contains $a_{i-2}$.  We consider two cases.

\bigskip

\begin{figure}[ht]
\begin{overpic}[width=0.6\textwidth]{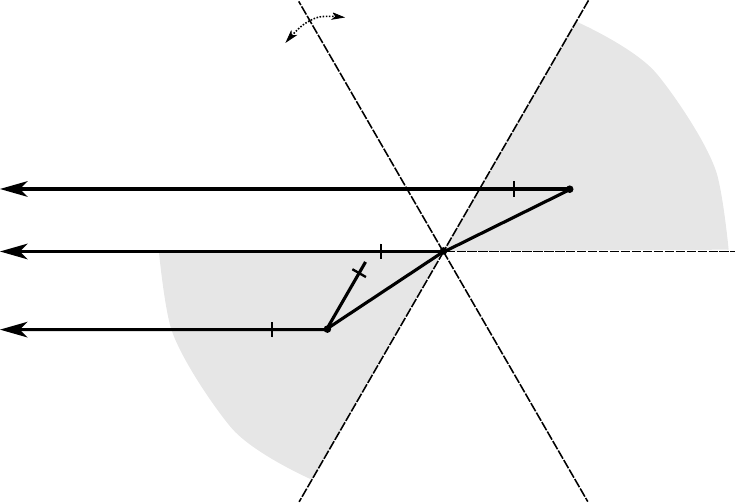} 
\put(-3,44){$\xi$}
\put(-3,33){$\xi$}
\put(-3,22){$\xi$}
\put(8,44){\footnotesize{$M_i''$}}
\put(8,36){\footnotesize{$M_{i-1}''$}}
\put(8,25){\footnotesize{$M_{i-2}''$}}
\put(69,45){\footnotesize{$p_i$}}
\put(51,36){\footnotesize{$p_{i-1}$}}
\put(35,26){\footnotesize{$p_{i-2}$}}
\put(34,31){\footnotesize{$g_{i-2}p_{i-2}$}}
\put(79,43){\footnotesize{$a_i$}}
\put(63,32){\footnotesize{$a_{i-1}$}}
\put(46,23){\footnotesize{$a_{i-2}$}}
\put(35,12){\footnotesize{$S_{i-1}$}}
\put(76,54){\footnotesize{$g_{i-1}S_{i-1}$}}
\put(43,68){\footnotesize{$r_{i-1}$}}
\end{overpic}
\caption{The proof of Lemma~\ref{lem:induct} in type $A_2$, Case I.}
\label{fig:Lemma315typeA2opposite}
\end{figure}

\emph{Case I: $g_{i-1} c_{i-1}$ and $c_{i-1}$ are opposite.}    See Figure~\ref{fig:Lemma315typeA2opposite}.  In this case, since $g_{i-1} S_{i-1}$ and $S_{i-1}$ are opposite, by~\cite[Proposition~1.12]{Parreau} there is a unique apartment $\App_{i-1}$ which contains both $S_{i-1}$ and $g_{i-1} S_{i-1}$.  Let $r_{i-1}$ be the reflection of $\App_{i-1}$ in its unique wall which passes through $a_{i-1}$ and does not bound $S_{i-1}$ (or $g_{i-1} S_{i-1}$).  That is, $r_{i-1}$ is the reflection of $\App_{i-1}$ which fixes $a_{i-1}$ and takes $p_{i-1}$ to $g_{i-1} p_{i-1}$.  Then the geodesic segment $[a_{i-1},a_i] = g_{i-1}[a_{i-1},a_{i-2}]$ of $\App_{i-1}$ is obtained from $[a_{i-1},a_{i-2}]$ by applying the reflection $r_{i-1}$.  For $j = i-2,i-1,i$, write $M_j''$ for the wall of $\App_{i-1}$ which passes through $a_j$ and contains $p_j$.  Then since each $M_j''$ contains at least some initial portion of the geodesic ray $\br_j$, the three walls $M_{i-2}''$, $M_{i-1}''$ and $M_i''$ of $\App_{i-1}'$ are mutually parallel.  It follows that $r_{i-1}$ maps the wall of $\App_{i-1}$ which passes through $a_{i-2}$ and contains $g_{i-2} p_{i-2}$ to $M_i''$.  Hence $g_{i-1} g_{i-2} p_{i-2} = g p_{i-2} = p_i$ in this case.

\bigskip

\begin{figure}[ht]
\begin{overpic}[width=0.6\textwidth]{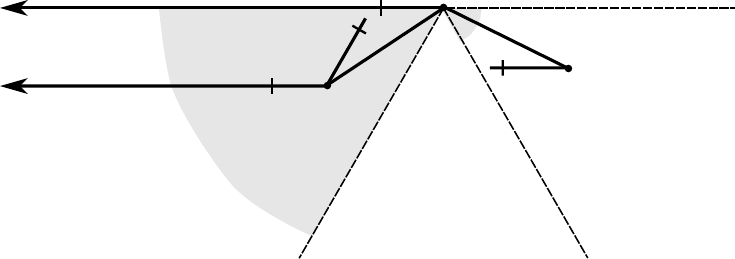} 
\put(-3,33){$\xi$}
\put(-3,22){$\xi$}
\put(51,36){\footnotesize{$p_{i-1}$}}
\put(35,26){\footnotesize{$p_{i-2}$}}
\put(34,31){\footnotesize{$g_{i-2}p_{i-2}$}}
\put(78,26){\footnotesize{$\rho_{i-1}(a_{i})$}}
\put(64,22.5){\footnotesize{$\rho_{i-1}(p_{i})$}}
\put(60,35){\footnotesize{$a_{i-1}$}}
\put(46,23){\footnotesize{$a_{i-2}$}}
\put(35,12){\footnotesize{$S_{i-1}$}}
\put(78,15){\footnotesize{$\rho_{i-1}(g_{i-1}S_{i-1})$}}
\end{overpic}
\caption{The proof of Lemma~\ref{lem:induct} in type $A_2$, Case II.}
\label{fig:Lemma315typeA2nonopposite}
\end{figure}

\emph{Case II: $g_{i-1} c_{i-1}$ and $c_{i-1}$ are not opposite.}  See Figure~\ref{fig:Lemma315typeA2nonopposite}.  Note that these chambers cannot be adjacent in $\Delta_{i-1}$, as $g_{i-1} c_{i-1}$ contains a panel opposite to the panel $p_{i-1}$ of $c_{i-1}$.  By~\cite[Proposition 1.15]{Parreau}, there is an apartment $\App_{i-1}$ which contains $S_{i-1}$ and a germ of $g_{i-1} S_{i-1}$.  Write $\rho_{i-1}$ for the retraction of $X$ onto $\App_{i-1}$ such that $\rho_{i-1}^{-1}(a_{i-1}) = \{ a_{i-1} \}$, as guaranteed by~\cite[Axiom (A5$'$)]{Parreau}.  Then by~\cite[Proposition 1.17]{Parreau}, $\rho_{i-1}$ maps $g_{i-1}S_{i-1}$ isometrically onto the sector $\rho_{i-1}(g_{i-1}S_{i-1})$ of $\App_{i-1}$ which is based at $a_{i-1}$ and has the same germ as $g_{i-1}S_{i-1}$.  Thus $d(a_{i-1}, a_{i-2}) = d(a_{i-1}, a_i) = d(a_{i-1}, \rho_{i-1}(a_i))$, and $S_{i-1}$ and $\rho_{i-1}(g_{i-1} S_{i-1})$ are nonadjacent and nonopposite sectors of $\App_{i-1}$ which respectively contain the geodesic segments $[a_{i-1},a_{i-2}]$ and $[a_{i-1}, \rho_{i-1}(a_i)]$.  Hence $[a_{i-1},\rho_{i-1}(a_i)]$ is obtained from $[a_{i-1},a_{i-2}]$ by applying a rotation of $\App_{i-1}$ about the point $a_{i-1}$ through angle $2\pi/3$.  This rotation takes the wall of $\App_{i-1}$ through $a_{i-2}$ which contains $g_{i-2} p_{i-2}$ to the wall of $\App_{i-1}$ through $\rho_{i-1}(a_i)$ which contains $\rho_{i-1}(p_i)$.  Since retractions and this rotation are type-preserving, it follows that $g_{i-1} g_{i-2} p_{i-2} = g p_{i-2} = p_i$, as required.

We have now shown that $p_i = g p_{i-2}$.  Since $g_{i-2} c_{i-2}$ contains $p_{i-2}^\op$, which is opposite $p_{i-2}$, we have that 
 $gg_{i-2}c_{i-2}$ contains $g(p_{i-2}^\op)$, which is opposite $p_i = gp_{i-2}$.  Also, 
\[
g_i c_i = gg_{i-2}g^{-1} gc_{i-2} = gg_{i-2}c_{i-2}.
\]
So $g_ic_i$ contains a panel $p_i^{\mathrm{op}}$ which is opposite $p_i$, as required to finish the proof of (2).

Now the same arguments as in the cases $i = 1,2$ show that $\angle_{a_i}(p_i, g_i x_i) \geq \alpha$.  
By construction we have $\angle_{a_i}(p_i,g_i x_i) = \angle_{a_i}(\xi, a_{i+1})$, which completes the proof.
\end{proof}

\begin{rmk}\label{rem:G2main}  Despite substantial effort we have been unable to extend our approach to type~$\tG_2$. First, as explained in Remark~\ref{rem:G2local}, there cannot be a local lemma for $G_2$ which  guarantees that we obtain a panel opposite to $p_i$ at every step.  Hence we cannot make use of opposite sectors in $X$ as we did in type $\tC_2$ above. 

Moreover, the local lemma that we have been able to prove in type $G_2$ (see Lemma~\ref{lem:local-G2}) does not give us enough control to be able to show that $p_i = g p_{i-2}$, as we did in type $\tA_2$.  The real issue is that for $X$ of type $\tG_2$, if possibility (2) in Lemma~\ref{lem:local-G2} occurs at step $i$, then the rays $\br_{i}$ and $\br_{i+1}$ have germs which may or may not be parallel in an apartment of $X$ which contains both of these germs.  This makes it very difficult to run an inductive procedure.  We have also tried defining $g = g_4 g_3 g_2 g_1$ in various ways, rather than using $g = g_2 g_1$ as above, but this just postpones the problem. 
\end{rmk}

\subsection{Unbounded Busemann function} \label{sec:busemann}

For any metric space $Y$, the Busemann function (see \cite[Definition II.8.17]{BH}) associated to a geodesic ray $\gamma$ in $Y$ is given by, for $y \in Y$, \[ b_\gamma(y):= \lim_{t\to \infty} [d(y,\gamma(t))-t]. \] 
We will apply the following general result.

\begin{lemma}\label{lem:busemann}  Let $Y$ be a complete $\CAT(0)$ space.  Let $y \in Y$, let $\eta \in \partial Y$ and let $\gamma$ be the geodesic ray $[y,\eta)$.  Suppose $z \in Y$ is such that $d(y,z) = D > 0$ and $\angle_{y}(\eta,z) = \theta > \frac{\pi}{2}$.  Then
\[
b_\gamma(z) \geq D \sin \left( \theta - \frac{\pi}{2} \right) > 0.
\]
\end{lemma}
\begin{proof}  Fix $t > 0$ and write $y_t = \gamma(t)$, so that $\theta = \angle_{y}(\eta,z) = \angle_y(y_t, z)$.  Consider a triangle in the Euclidean plane $\mathbb{E}^2$ with vertices $\hat{y_t}$, $\hat{y}$ and $\hat{z}$, so that $d(\hat{y},\hat{y_t}) = d(y, y_t) = t$, $d(\hat{y},\hat{z}) = d(y,z) = D$ and $\angle_{\hat{y}}(\hat{y_t},\hat{z}) = \theta$.  Then by~\cite[Proposition II.1.7(5)]{BH}, we have $d(z,y_t) \geq d(\hat{z},\hat{y_t})$.  

Let $\ell$ be the line in $\mathbb{E}^2$ which extends $[\hat{y_t},\hat{y}]$ and let $p_\ell: \mathbb{E}^2 \to \ell$ be the closest-point projection.  Then since $\theta > \frac{\pi}{2}$, the point $\hat{y}$ lies strictly between the points $\hat{y_t}$ and $p_\ell(\hat{z})$ on~$\ell$.  If $\theta = \pi$, equivalently $\hat{z}$ lies on $\ell$, then \[ d(\hat{z},\hat{y_t}) = d(\hat{z},\hat{y}) + d(\hat{y},\hat{y_t}) = D + t = D\sin\left(\theta - \frac{\pi}{2}\right) + t.\]  Otherwise, by considering the right-angled Euclidean triangle with vertices $\hat{y}$, $p_\ell(\hat{z})$ and~$\hat{z}$, we calculate $d(\hat{y}, p_\ell(\hat{z})) = D \sin (\theta - \frac{\pi}{2})$.  Hence as $[\hat{z}, \hat{y_t}]$ is the hypotenuse of the right-angled Euclidean triangle with vertices $\hat{y_t}$, $p_\ell(\hat{z})$ and $\hat{z}$, we obtain \[
d(\hat{z}, \hat{y_t}) > d(p_\ell(\hat{z}), \hat{y_t}) = d(p_\ell(\hat{z}),\hat{y}) + d(\hat{y},\hat{y_t}) = D \sin \left(\theta - \frac{\pi}{2}\right) + t.\]

We have now shown that for all $t > 0$,
\[
d(z, \gamma(t)) - t = d(z, y_t) - t \geq D \sin \left( \theta - \frac{\pi}{2}\right).
\]
Taking the limit as $t \to \infty$ gives the desired result.
\end{proof}

Now let $X$ be as in Section~\ref{sec:constructions}, and let $\{ a_i \}_{i=0}^\infty \subset X$ and $\xi \in \partial X$ be as constructed there.  Recall that $\br_i$ is the geodesic ray $[a_i,\xi)$.

\begin{corollary}\label{unbounded}  
$\lim_{i\to \infty} b_\mathbf{r_1}(a_i) = +\infty$.
\end{corollary}
\begin{proof}
Let $D=d(a_0,a_1) = d(a_i,a_{i+1}) > 0$.  By Lemma~\ref{lem:sequence}, for all $i \geq 1$ we have $\angle_{a_i}(\xi,a_{i+1}) \geq \alpha > \frac{\pi}{2}$.  Thus by Lemma~\ref{lem:busemann}, for all $i \geq 1$
\[
b_{\br_i}(a_{i+1}) \geq D \sin \left( \alpha - \frac{\pi}{2} \right) > 0.
\]
As $b_{\br_i}(a_i) = 0$, in fact
\[
b_{\br_i}(a_{i+1}) - b_{\br_i}(a_i) \geq D \sin \left(\alpha - \frac{\pi}{2} \right) > 0.
\]
Now for all $i \geq 1$ the rays $\br_i$ are asymptotic, since they all have endpoint~$\xi$.  Hence the Busemann functions $b_{\br_i}$ pairwise differ by a constant~\cite[Corollary II.8.20]{BH}.   Thus the difference $b_{\br_1}(a_{i+1}) -b_{\br_1}(a_{i})$ has a strictly positive lower bound which is independent of~$i$, proving the result. 
\end{proof}

\subsection{End of proof of Proposition~\ref{prop:hyp}}\label{sec:end}
If $g$ were elliptic then $a_0$ would have a bounded orbit under $g$.  By definition, $a_{2k}$ equals $g^{k}a_0$ for all $k \geq 0$, and by Corollary~\ref{unbounded} we have $\lim_{k \to \infty} b_{\br_1}(a_{2k}) = + \infty$.  But $b_{\br_1}(a_{2k}) - b_{\br_1}(a_0)$ is at most $d(a_0,a_{2k})$, a contradiction.  Thus by~\cite[Corollaire~4.2]{Parreau_immeubles}, $g$ is hyperbolic.

\renewcommand{\refname}{Bibliography}
\bibliography{bibliography}
\bibliographystyle{siam}

\end{document}